\DeclareMathOperator{\dom}{dom}
\DeclareMathOperator{\SL}{SL}
\newcommand{\cE}{\mathcal{E}}
\newcommand{\cJ}{\mathcal{J}}
\newcommand{\cS}{\mathcal{S}}
\newcommand{\dC}{\mathbb{C}}
\newcommand{\dR}{\mathbb{R}}
\newcommand{\dZ}{\mathbb{Z}}
\newcommand{\gp}{\mathsf{GP}}
\newcommand{\hlc}{\mathsf{HLC}}
\newcommand{\pp}{\mathsf{PP}}
\newcommand{\SC}{\mathsf{SC}}
\newcommand{\sg}{\mathsf{SG}}
\newcommand{\sq}{\mathsf{SQ}}
\newcommand{\tg}{\mathsf{TG}}
\newcommand{\tgp}{\mathsf{TGP}}
\newcommand{\smat}[4]{\left(\begin{smallmatrix} #1 & #2 \\ #3 & #4 \end{smallmatrix}\right)}
\newcommand{\tH}{\tilde H}
\newtheorem{definition}{Definition}
\newtheorem{theorem}[definition]{Theorem}
\newtheorem{proposition}[definition]{Proposition}
\theoremstyle{definition}
\newtheorem{example}{Example}
\numberwithin{equation}{section}
\numberwithin{definition}{section}
\begin{document}
\title{``Graph Paper'' Trace Characterizations of Functions of Finite Energy}
\author{Robert S. Strichartz}
\thanks{Research supported in part by the National Science Foundation, grant DMS-1162045.}
\address{Mathematics Department, Malott Hall, Cornell University, Ithaca, NY 14853}
\email{str@math.cornell.edu}
\date{}

\begin{abstract}
We characterize functions of finite energy in the plane in terms of their 
traces on the lines that make up ``graph paper'' with squares of side length 
$m^n$ for all $n$, and certain $\frac 1 2$-order Sobolev norms on the graph 
paper lines. We also obtain analogous results for functions of finite energy 
on two classical fractals: the Sierpinski gasket and the Sierpinski carpet. 
\end{abstract}

\maketitle

\section{Introduction}\label{sec:!}

Functions of finite energy play an important role in analysis and probability. 
On Euclidean space or a domain in Euclidean space, these are just the 
functions whose gradient in the distribution sense belongs to $L^2$, with 
the energy given by 
\begin{equation}\label{eq:1.1}
  \int \left| \nabla F\right|^2\, dx
\end{equation}
As such they make up a homogeneous Sobolev space that we will denote here as 
$H^1$. The more usual inhomogeneous Sobolev space is smaller, requiring that 
$F\in L^2$ as well \cite{9,10}. There are many ways to generalize the notion 
of finite energy to other contexts. For example, as the functions in the 
domain of a Dirichlet form \cite{4}. In this paper we will only consider 
functions of finite energy in regions in the plane, and on two classical 
fractals, the Sierpinski gasket \cite{8,12} and the Sierpinski carpet 
\cite{1,2}. 

It is well-known that functions of finite energy in the plane (or in higher 
dimensions) do not have to be continuous, so the value $F(x,y)$ at a point is 
not meaningful. Nevertheless, the trace on a line, say $TF(x) = F(x,0)$, is 
well-defined, and belongs to a certain $\frac 1 2$-order homogeneous Sobolev 
space that we will denote here by $H^{1/2}(\dR)$, defined by the finiteness of 
\begin{equation}\label{eq:1.2}
  \int_{--\infty}^\infty \int_{-\infty}^\infty \frac{\left|f(x)-f(y)\right|^2}{|x-y|^2}\, dxdy\text{,}
\end{equation}
with a corresponding norm estimate. Of course it is the norm estimate that is 
important since it implies the existence of the trace by routine arguments. 
The result is sharp, meaning that there is an extension operator from 
$H^{1/2}(\dR)$ to $H^1(\dR^2)$. There are in fact two rather natural 
$\frac 1 2$-order Sobolev spaces on $\dR$. The other one, which we denote by 
$\tH^{1/2}(\dR)$ is larger, and only requires the finiteness of an integral like 
(\ref{eq:1.2}) where the integration is restricted to the region 
$|x-y|\leqslant 1$. We will show that the trace of a function $F$ of finite 
energy in the strip $\{(x,y) : 0<y<1\}$ only belongs to $\tH(\dR)$. In 
particular this implies that there does not exist a Sobolev extension theorem 
from $H^1$ of the strip to $H^1(\dR^2)$, even though such a result for 
inhomogeneous Sobolev spaces is well-known and essentially trivial. 

The trace of a function of finite energy on a single line does not, of course, 
determine the function. What about the trace of an infinite collection of 
lines that together form a dense subset of the plane? A simple example is the 
set of lines of ``graph paper,'' where we take the graph paper squares to have 
side length $m^n$, where $m$ is an integer ($m\geqslant 2$) and $n$ varies 
over $\dZ$, so the graph papers $\gp_{m^n}$ are nested. The main results of 
this paper are first a trace theorem that characterizes the traces of 
$H^1(\dR^2)$ functions on $\gp_{m^n}$ in terms of a Sobolev space 
$H^{1/2}(\gp_{m^n})$ with a given norm, and then the characterization of 
$H^1(\dR^2)$ in terms of a uniform bound on the norms of the traces on 
$\gp_{m^n}$ as $n\to -\infty$. 
The trace theorem is discussed in section \ref{sec:3} in the context of 
Sobolev spaces $H^{1/2}$ on \emph{metric graphs} (graphs whose edges have 
specified length, \cite{3}), as discussed in section \ref{sec:2}. Because the 
functions in these spaces need not be continuous, the key issue is to 
understand a kind of ``gluing'' condition at the vertices of the graph. It 
turns out that this condition was given in \cite{11}. For the 
convenience of the reader we give all the proofs in section \ref{sec:2}, 
although many of the results are already known, because they are usually 
treated in the context of inhomogeneous Sobolev spaces. In section \ref{sec:4} 
we discuss the trace characterizations of $H^1(\dR^2)$. In section \ref{sec:5} 
we discuss the analogous results on the two fractals. It turns out that the 
trace theorems are already known \cite{5,6,7}, and the Sobolev spaces are 
$H^\beta$ for values satisfying $\frac 1 2<\beta<1$. The spaces of functions 
of finite energy on these fractals consist of continuous functions, as do the 
trace spaces, so there is no difficulty defining the traces, and the 
``gluing'' condition at vertices is simply continuity. Thus the fractal analog 
of the trace characterization is perhaps simpler than the theorem in the 
plane. We also characterize the traces on Julia sets of functions of finite 
energy in the unbounded component of the complement of the Julia set. We 
believe strongly that there is a great benefit to thinking about problems in 
both the smooth and the fractal contexts, and looking for interactions in the 
ideas that emerge. We hope this paper gives some support to this point of 
view.

\section{Metric Graphs}\label{sec:2}

A \emph{metric graph} $G=(V,E,L_e)$ consists of a graph $(V,E)$ with vertices 
$V$ and edges $E$, and a function that assigns a length $L_e$ in $(0,\infty]$ 
to each edge $e\in E$. 

\begin{definition}\label{dfn:2.1}
For a metric graph $G=(V,E,L_e)$, define the homogeneous Sobolev norm 
\begin{equation}\label{eq:2.1}
\begin{aligned}
  \left\|f\right\|_{H^{1/2}(G)}^2 
    &= \sum_{e\in E} \int_0^{L_e} \int_0^{L_e} \frac{\left|f\left(e(x)\right) - f\left(e(y)\right)\right|^2}{|x-y|^2}\, dxdy \\
    &+ \sum_{e\sim e'} \int_0^L \frac{\left|f\left(e(x)\right) - f\left(e'(x)\right)\right|^2}{x}\, dx
\end{aligned}
\end{equation}
(in the second sum $L=\min(L_e,L_{e'})$, and the parameterizations of $e$ and 
$e'$ are chosen so that $e(0)$ and $e'(0)$ correspond to the intersection 
point). We define the Sobolev space $H^{1/2}(G)$ to be the equivalence classes 
(modulo constants) of locally $L^2$ functions for which the norm is finite. It 
is easy to see that $H^{1/2}(G)$ is a Hilbert space. 
\end{definition}

\begin{example}\label{ex:1}
Let $G=\dR$, so $G$ has no vertices and a single edge of infinite length. We 
need to modify (\ref{eq:2.1}) in this case to read 
\begin{equation}\label{eq:2.2}
  \left\|f\right\|_{H^{1/2}(\dR)}^2 = \int_{-\infty}^\infty \int_{-\infty}^\infty \frac{\left|f(x)-f(y)\right|^2}{|x-y|^2}\, dxdy\text{.}
\end{equation}
For this example we also want to consider the smaller norm 
\begin{equation}\label{eq:2.3}
  \left\|f\right\|_{\tH^{1/2}(\dR)}^2 = \iint_{|x-y|\leqslant 1} \frac{\left|f(x)-f(y)\right|^2}{|x-y|^2} \, dxdy
\end{equation}
and corresponding larger Sobolev space $\tH^{1/2}(\dR)$. 
\end{example}

We note that the space $H^{1/2}(\dR)$ is M\"obius invariant, meaning that 
$f\in H^{1/2}(\dR)$ if and only if $f\circ M\in H^{1/2}(\dR)$ with equal 
norms, for $M(x)=\frac{ax+b}{cx+d}$ with $\smat a b c d\in \SL(2,\dR)$. 
Indeed it suffices to verify this for translations $M(x)=x+b$, dilations 
$M(x) = a x$ and the inversion $M(x)=\frac 1 x$, where it follows by a 
simple change of variable in the integral defining the norm. We note that the 
same statement is false for $\tH^{1/2}(\dR)$. 

We may easily characterize these norms and spaces in terms of the Fourier 
transform $\hat f$. The finiteness of the norm easily implies that $f$ is a 
tempered distribution so $\hat f$ is well defined as a tempered distribution, 
and the equivalence of the functions that differ by a constant means $\hat f$ 
is only defined up to the addition of an arbitrary multiple of the delta 
function. Note that there is no ``canonical'' choice of $f$ and $\hat f$ 
within each equivalence class. 

\begin{theorem}\label{thm:2.2a}
a) $f\in H^{1/2}(\dR)$ if and only if $\hat f$ may be identified with a 
function that is locally in $L^2$ in the complement of the origin with 
\begin{equation}\label{eq:2.4}
  \int_{-\infty}^\infty |\hat f(\xi)|^2 |\xi|\, d\xi < \infty\text{,}
\end{equation}
and (\ref{eq:2.4}) is in fact a constant multiple of (\ref{eq:2.2})

b) $f\in \tH^{1/2}(\dR)$ if and only if $\hat f$ may be identified with a 
function that is locally in $L^2$ in the compliment of the origin, with 
\begin{equation}\label{eq:2.5}
  \int_{|\xi|\geqslant 1} |\hat f(\xi)|^2|\xi|\, d\xi + \int_{|\xi|\leqslant 1} |\hat f(\xi)|^2|\xi|^2\, d\xi < \infty\text{,}
\end{equation}
and (\ref{eq:2.5}) is bounded above and below by a multiple of (\ref{eq:2.3}). 
\end{theorem}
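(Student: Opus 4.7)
The plan is to reduce both parts to one Plancherel identity followed by pointwise estimates on an auxiliary kernel. First, in (\ref{eq:2.2}) I would make the change of variables $h=x-y$ to obtain
\[
\|f\|_{H^{1/2}(\dR)}^2 = \int_{-\infty}^\infty \frac{1}{h^2}\int_{-\infty}^\infty |f(x)-f(x-h)|^2\,dx\,dh,
\]
and similarly for (\ref{eq:2.3}) with the extra restriction $|h|\leqslant 1$. The inner integral is the squared $L^2$-norm of a translation-difference, whose Fourier transform in $x$ is $(1-e^{-ih\xi})\hat f(\xi)$. Plancherel together with Fubini then converts the two norms into
\[
\|f\|_{H^{1/2}(\dR)}^2 = \int_{-\infty}^\infty |\hat f(\xi)|^2 K_\infty(\xi)\,d\xi, \qquad \|f\|_{\tH^{1/2}(\dR)}^2 = \int_{-\infty}^\infty |\hat f(\xi)|^2 K_1(\xi)\,d\xi,
\]
where $K_\infty(\xi)=\int_\dR \frac{|1-e^{-ih\xi}|^2}{h^2}\,dh$ and $K_1(\xi)$ is the same integral restricted to $|h|\leqslant 1$.

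For part (a), the dilation $u=h\xi$ immediately shows $K_\infty(\xi)=c|\xi|$ with $c=\int_\dR |1-e^{-iu}|^2/u^2\,du \in (0,\infty)$, giving (\ref{eq:2.4}) as a constant multiple of (\ref{eq:2.2}). For part (b), I would establish $K_1(\xi)\asymp |\xi|$ for $|\xi|\geqslant 1$ and $K_1(\xi)\asymp \xi^2$ for $|\xi|\leqslant 1$: the first by splitting the $h$-integral at $|h|=1/|\xi|$ and using $|1-e^{-ih\xi}|\leqslant |h\xi|$ on the inner piece and $u=h\xi$ on the outer (where the rescaled integrand is bounded above and below away from $u=0$); the second by the uniform Taylor expansion $|1-e^{-ih\xi}|^2=\xi^2h^2+O(\xi^4 h^4)$ valid for $|h|\leqslant 1$, $|\xi|\leqslant 1$. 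These weight estimates deliver the two-sided equivalence of (\ref{eq:2.3}) with (\ref{eq:2.5}).

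The main obstacle is the bookkeeping near $\xi=0$. Since $H^{1/2}(\dR)$ and $\tH^{1/2}(\dR)$ are defined modulo constants, on the Fourier side $\hat f$ is only determined up to a multiple of $\delta_0$. I would argue that finiteness of (\ref{eq:2.4}) or (\ref{eq:2.5}) forces $\hat f$ to coincide, away from the origin, with a function in $L^2_{\mathrm{loc}}(\dR\setminus\{0\})$—this is exactly what the weights $|\xi|$ and the weight in (\ref{eq:2.5}) control—so that only the $\delta_0$-component of $\hat f$ remains free, mirroring exactly the additive-constant indeterminacy of $f$. Once this is in place, the Plancherel identity (and the two-sided equivalence in (b)) extends from Schwartz representatives to the full equivalence class by density. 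The delicate direction is reconstructing a tempered-distribution representative from a given $\hat f$ satisfying (\ref{eq:2.5}), since the weaker low-frequency weight $|\xi|^2$ allows $\hat f$ to behave like $1/|\xi|$ near zero, which is not locally integrable but still defines a tempered distribution once the $\delta_0$ ambiguity is absorbed into the choice of additive constant for $f$.
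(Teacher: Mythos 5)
Your proof follows essentially the same route as the paper: the translation-difference/Plancherel identity with the kernel $K$, the dilation giving $K_\infty(\xi)=c|\xi|$, and the two-regime estimate of the truncated kernel ($\asymp|\xi|$ for $|\xi|\geqslant 1$, $\asymp\xi^2$ for $|\xi|\leqslant 1$) are exactly the paper's computation. The only point where the paper is more explicit is the reconstruction step in (b): it builds the tempered representative via the regularized pairing (\ref{eq:2.7}), i.e.\ subtracting $\varphi(0)\psi(\xi)$ before dividing by $\xi$ (and then notes the resulting $f$ has derivative in $L^2$), which is the precise mechanism behind your remark that the $1/\xi$-type low-frequency singularity is absorbed into the additive-constant ambiguity of $f$.
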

\begin{proof}
a) is of course well-known, and follows from the formal computation 
\begin{align*}
  \|f\|_{H^{1/2}(\dR)}^2 
    &= \int_{-\infty}^\infty \int_{-\infty}^\infty |f(x+t)-f(x)^2\, dx\frac{dt}{t^2} \\
    &= \int_{-\infty}^\infty \int_{-\infty}^\infty |\hat f(\xi)|^2 |e^{2\pi i \xi t}-1|^2\, d\xi\frac{dt}{t^2} \\
    &= c \int_{-\infty}^\infty |\hat f(\xi)|^2|\xi|\, d\xi
\end{align*}
for $c=\int_{-\infty}^\infty \frac{|e^{2\pi i t}-1|^2}{t^2}\, dt$. 

To prove b) we similarly compute 
\begin{align*}
  \|f\|_{\tH^{1/2}(\dR)}^2 
    &= \int_{-1}^1 \int_{-\infty}^\infty |f(x+t)-f(x)^2\, dx\frac{dt}{t^2} \\
    &= \int_{-\infty}^\infty |\hat f(\xi)|^2 \left(\int_{-1}^1 |e^{2\pi i \xi t}-1|^2\, \frac{dt}{t^2}\right)\, d\xi
\end{align*}
Now 
\[
  \int_{-1}^1 |e^{2\pi i \xi t} - 1|^2\, \frac{dt}{t^2} = |\xi| \int_{-|\xi|}^{|\xi|} |e^{2\pi i t}-1|^2\, \frac{dt}{t^2}\text{,}
\]
and for $|\xi|\geqslant 1$ the last integral is bounded above and below by a 
constant. On the other hand, for $|\xi|\leqslant 1$, the integrand is bounded 
above and below by a constant, so the integral is bounded above and below by 
the length of the interval. This shows the equivalence of (\ref{eq:2.3}) and 
(\ref{eq:2.5}).

The formal computation easily implies that any $f\in \tH^{1/2}(\dR)$ has a 
Fourier transform satisfying (\ref{eq:2.5}). To complete the proof we need to 
show that any locally $L^2$ function $g(\xi)$ with 
\begin{equation}\label{eq:2.6}
  \int_{|\xi|\geqslant 1} |g(\xi)|^2|\xi|\, d\xi + \int_{|\xi|\leqslant 1} |g(\xi)|^2 |\xi|^2\, d\xi < \infty
\end{equation}
is in fact the Fourier transform of a function in $\tH^{1/2}(\dR)$. Since the 
only problem is near the origin, we may assume that $g$ is supported in 
$[-1,1]$. Let $h(\xi) = \xi g(\xi)$. Note that $h\in L^2$ by (\ref{eq:2.6}). 
We define a distribution $\tilde g$ associated to $g$ as follows. Note that 
$\langle \tilde g,\varphi\rangle = \int h(\xi)\varphi(\xi)\,\frac{d\xi}{\xi}$ 
is well-defined for any $\varphi\in \cS$ with $\varphi(0) = 0$. Choose 
$\psi\in \cS$ with $\psi(0) = 1$. Then 
$\varphi(\xi) = \left(\varphi(\xi)-\varphi(0)\psi(\xi)\right)+\varphi(0)\psi(\xi)$, 
with the first summand vanishing at the origin. We will choose to have 
$\langle \tilde g,\psi\rangle = 0$, so our definition of $\tilde g$ is 
\begin{equation}\label{eq:2.7}
  \langle \tilde g,\varphi\rangle = \int h(\xi) \left(\varphi(\xi)-\varphi(0)\psi(\xi)\right)\,\frac{d\xi}{\xi}\text{.}
\end{equation}
It follows that $h(\xi)=\xi \tilde g(\xi)$ in the distribution sense. The 
inverse Fourier transform of $\tilde g$ is the function $f$. Note that $f$ has 
a derivative in $L^2$ so it is continuous, and the formal computation shows 
that $f\in \tH^{1/2}(\dR)$. 
\end{proof}

A trivial consequence of the theorem is that the space $\tH^{1/2}(\dR)$ is 
strictly larger than $H^{1/2}(\dR)$. On the other hand, 
$L^2(\dR)\cap \tH^{1/2}(\dR) = L^2(\dR)\cap H^{1/2}(\dR)$. 

\begin{example}\label{ex:2}
Let $G$ be the graph with one vertex and two edges of infinite length meeting 
at the vertex. We may realize $G$ as the real line with edges $(-\infty,0]$ 
and $[0,\infty)$, and we write it as $\dR^-\cup \dR^+$. We see that 
(\ref{eq:2.1}) explicitly is 
\begin{equation}\label{eq:2.8}
\begin{aligned}
  \|f\|_{H^{1/2}(\dR^-\cup \dR^+)}^2 
    &= \int_{-\infty}^0 \int_{-\infty}^0 \frac{|f(x)-f(y)|}{|x-y|}\, dxdy \\
    &+ \int_0^\infty \int_0^\infty \frac{|f(x)-f(y)|^2}{|x-y|^2}\, dxdy\\
    &+ \int_0^\infty \frac{|f(x)-f(-x)|^2}{x}\, dx
\end{aligned}
\end{equation}
\end{example}

\begin{theorem}\label{thm:2.2b}
The spaces $H^{1/2}(\dR^-\cup \dR^+)$ and $H^{1/2}(\dR)$ are identical 
with equivalent norms. 
\end{theorem}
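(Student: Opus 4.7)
The plan is to prove the equivalence of the norms \eqref{eq:2.2} and \eqref{eq:2.8}; the equality of the two spaces (modulo constants) then follows immediately. For the upper bound $\|f\|_{H^{1/2}(\dR)}^2 \leq C\|f\|_{H^{1/2}(\dR^-\cup\dR^+)}^2$, split the double integral in \eqref{eq:2.2} according to the signs of $x$ and $y$. The $(\dR^+,\dR^+)$ and $(\dR^-,\dR^-)$ quadrants reproduce the within-side terms of \eqref{eq:2.8} (the latter after the change of variables $x\mapsto -x$, $y\mapsto -y$). The two remaining cross terms are equal by symmetry of the integrand in $(x,y)$, and substituting $y=-t$ writes their sum as $2\int_0^\infty\int_0^\infty \frac{|f(x)-f(-t)|^2}{(x+t)^2}\,dt\,dx$. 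Apply the triangle inequality
\[
  |f(x)-f(-t)|^2 \leq 2|f(x)-f(-x)|^2 + 2|f(-x)-f(-t)|^2.
\]
The first piece contributes, via the elementary identity $\int_0^\infty dt/(x+t)^2=1/x$, a multiple of the gluing term in \eqref{eq:2.8}; the second piece is bounded by a multiple of the within-$\dR^-$ term using $(x+t)^2\geq (x-t)^2$ for $x,t>0$.

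For the reverse inequality $\|f\|_{H^{1/2}(\dR^-\cup\dR^+)}^2 \leq C\|f\|_{H^{1/2}(\dR)}^2$, the two within-side integrals of \eqref{eq:2.8} are trivially dominated by \eqref{eq:2.2} since they arise from restricting the region of integration. The essential work is to control the gluing term $\int_0^\infty \frac{|f(x)-f(-x)|^2}{x}\,dx$ by $\|f\|_{H^{1/2}(\dR)}^2$. For fixed $x>0$ and any $s\in(-x,x)$, the triangle inequality gives $|f(x)-f(-x)|^2 \leq 2|f(x)-f(s)|^2 + 2|f(s)-f(-x)|^2$. Averaging over $s\in(-x,x)$ then yields
\[
  |f(x)-f(-x)|^2 \leq \frac{1}{x}\int_{-x}^x \bigl(|f(x)-f(s)|^2 + |f(s)-f(-x)|^2\bigr)\,ds.
\]
I would integrate this against $dx/x$ and exchange the order of integration: for fixed $s\in\dR$ the variable $x$ runs over $[|s|,\infty)$, and on this range $|x-s|\leq 2x$ gives $1/x^2\leq 4/(x-s)^2$. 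Extending the $x$-range to all of $\dR$ and applying Fubini then bounds the result by a constant multiple of $\|f\|_{H^{1/2}(\dR)}^2$.

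The main obstacle will be this gluing-term estimate in the lower-bound direction: a naive single-midpoint triangle inequality produces a weight that cannot be compared to the $1/(x-s)^2$ kernel of the $H^{1/2}(\dR)$ norm. Averaging the intermediate point $s$ symmetrically across the vertex at $0$ is precisely what converts the $1/x$ weight of the gluing term into an integrand whose homogeneity, after Fubini, matches that of \eqref{eq:2.2}, so that the pointwise comparison $1/x^2\leq 4/(x-s)^2$ on the region $|s|\leq x$ closes the estimate. The upper-bound direction, by contrast, is a straightforward chain of triangle-inequality and symmetry manipulations.
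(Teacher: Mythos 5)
Your proposal is correct, and half of it coincides with the paper's argument: your bound for the cross-quadrant part of \eqref{eq:2.2} by the gluing term plus the within-side term (splitting $f(x)-f(-t)$ through the reflected point $-x$, using $\int_0^\infty dt/(x+t)^2=1/x$ and $(x+t)^2\geqslant (x-t)^2$) is exactly the paper's second estimate up to reflection and the cosmetic substitution of the pointwise inequality $|a+b|^2\leqslant 2|a|^2+2|b|^2$ for Minkowski's inequality in $L^2$. Where you genuinely differ is the key estimate of the gluing term $\int_0^\infty |f(x)-f(-x)|^2\,\frac{dx}{x}$ by $\|f\|_{H^{1/2}(\dR)}^2$. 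The paper writes $\frac1x=\int_0^\infty \frac{dy}{(x+y)^2}$ and splits $f(x)-f(-x)$ through $f(y)$ with $y>0$: one resulting piece is dominated by the $(+,+)$ quadrant term via $(x+y)^2\geqslant(x-y)^2$, and the other, $\int_0^\infty\int_0^\infty |f(y)-f(-x)|^2(x+y)^{-2}\,dy\,dx$, is literally a sub-integral of \eqref{eq:2.2} since $(x+y)$ is the exact distance from $y$ to $-x$. You instead average the intermediate point $s$ over the symmetric interval $(-x,x)$ and compare the resulting weight $x^{-2}$ pointwise with the kernel; both devices are correct and of comparable length, the paper's being slightly cleaner (no region-extension needed), yours perhaps more transparent about why symmetric placement of the intermediate point converts the $1/x$ weight into something with the right homogeneity. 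One small point to tidy in your write-up: after Fubini the two averaged pieces need separate kernel comparisons --- for the piece $|f(s)-f(-x)|^2$ the relevant estimate on $|s|\leqslant x$ is $x^{-2}\leqslant 4(s+x)^{-2}$, i.e.\ comparison with the distance from $s$ to $-x$ rather than to $x$ --- but this is symmetric to the case you did spell out and closes the same way.
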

\begin{proof}
This result is essentially contained in \cite{11}, section III.3. Let $x,y$ 
stand for variables that are always positive. Since 
$\int_0^\infty \frac{dy}{(x+y)^2} = \frac 1 x$ we have 
\[
  \int_0^\infty \frac{|f(x)-f(-x)|}{x}\, dx = \int_0^\infty \int_0^\infty \frac{|f(x)-f(-x)|^2}{(x+y)^2}\, dydx\text{.}
\]
Writing $f(x)-f(-x) = \left(f(x)-f(y)\right)+\left(f(y)-f(-x)\right)$, 
we have by the triangle inequality 
\begin{align*}
  \left(\int_0^\infty \frac{|f(x)-f(-x)|^2}{x}\, dx\right)^{1/2} 
    &\leqslant \left(\int_0^\infty \int_0^\infty \frac{|f(x)-f(y)|^2}{(x+y)^2}\, dydx\right)^{1/2} \\
    &+ \left(\int_0^\infty \int_0^\infty \frac{|f(y)-f(-x)|^2}{|x+y|^2}\, dydx\right)^{1/2} \\
    &\leqslant 2\|f\|_{H^{1/2}(\dR)}
\end{align*}
since $\frac{1}{(x+y)^2}\leqslant \frac{1}{(x-y)^2}$. This yields the bound of 
(\ref{eq:2.8}) by a multiple of $\|f\|_{H^{1/2}(\dR)}$. A similar argument 
gives 
\begin{align*}
  \left(\int_0^\infty\int_0^\infty\frac{|f(y)-f(-x)|^2}{|x+y|^2}\ dydx\right)^{1/2} 
    &\leqslant \left(\int_0^\infty \int_0^\infty \frac{f(x)-f(y)|^2}{|x+y|^2}\, dydx\right)^{1/2} \\
    &+ \left(\int_0^\infty \frac{|f(x)-f(-x)|^2}{x}\, dx\right)^{1/2}
\end{align*}
for the bound in the other direction. 
\end{proof}

\begin{example}\label{ex:3}
Let $G$ be the graph $\dZ$; in other words the vertices are the integers and 
the edges are $[k,k+1]$ for $k\in\dZ$ of length $1$. Then (\ref{eq:2.1}) is 
explicitly 
\begin{equation}\label{eq:2.9}
  \|f\|_{H^{1/2}(\dZ)}^2 = \sum_{k\in\dZ} \int_k^{k+1}\int_k^{k+1} \frac{|f(x)-f(y)|^2}{|x-y|^2} \, dxdy + \sum_{k\in\dZ} \int_0^1 \frac{|f(k+t)-f(k-t)|}{t}\, dt\text{.}
\end{equation}
\end{example}

\begin{theorem}\label{thm:2.3}
The spaces $H^{1/2}(\dZ)$ and $\tH^{1/2}(\dR)$ are identical with equivalent 
norms. 
\end{theorem}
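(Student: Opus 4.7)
The plan is to prove the equivalence of norms by decomposing the $\tH^{1/2}(\dR)$ integral according to the edge structure of $\dZ$. Since $|x-y|\leqslant 1$ forces $x$ and $y$ to lie in the same or in adjacent edges, we may write
\[
  \|f\|_{\tH^{1/2}(\dR)}^2 = \sum_{k\in\dZ} A_k + 2\sum_{v\in\dZ}\mathcal{S}_v\text{,}
\]
where $A_k = \int_k^{k+1}\int_k^{k+1}\frac{|f(x)-f(y)|^2}{|x-y|^2}\,dx\,dy$ is the same-edge contribution and, after the change of variables $x=v-s$, $y=v+t$,
\[
  \mathcal{S}_v = \iint_{s,t>0,\,s+t\leqslant 1}\frac{|f(v-s)-f(v+t)|^2}{(s+t)^2}\,ds\,dt
\]
is the crossing contribution at vertex $v$. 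The first sum of (\ref{eq:2.9}) is exactly $\sum_k A_k$, so it suffices to establish, at each vertex $v$, a two-sided bound between $\mathcal{S}_v$ and the gluing term $\mathcal{V}_v = \int_0^1 \frac{|f(v+t)-f(v-t)|^2}{t}\,dt$, modulo the local edge pieces $A_v$ and $A_{v-1}$.

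For $\mathcal{S}_v \lesssim \mathcal{V}_v + A_v + A_{v-1}$, I would adapt the proof of Theorem~\ref{thm:2.2b} to finite-length edges. Splitting $\mathcal{S}_v$ into the symmetric regions $s<t$ and $s>t$ and using the triangle inequality $|f(v-s)-f(v+t)|^2 \leqslant 2|f(v-s)-f(v+s)|^2 + 2|f(v+s)-f(v+t)|^2$ on the region $s<t$, the first piece reduces to a multiple of $\mathcal{V}_v$ after evaluating $\int_s^{1-s}\frac{dt}{(s+t)^2}=\frac{1}{2s}-1\leqslant \frac{1}{2s}$, while the second is bounded by $A_v$ via $(s+t)^{-2}\leqslant(t-s)^{-2}$ for $s,t>0$.

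For the reverse bound $\mathcal{V}_v \lesssim \mathcal{S}_v + A_v + A_{v-1}$, I would split the $t$-integral at $t=\tfrac12$. On $(0,\tfrac12)$, the identity $\frac{1}{t}=2\int_0^t\frac{du}{(t+u)^2}$ introduces an auxiliary variable $u<t$, automatically guaranteeing $u+t\leqslant 1$ so that the intermediate point $v-u$ lies inside the crossing region; then $|f(v+t)-f(v-t)|^2\leqslant 2|f(v+t)-f(v-u)|^2+2|f(v-u)-f(v-t)|^2$ bounds this portion of $\mathcal{V}_v$ by a multiple of $\mathcal{S}_v + A_{v-1}$ by the same double use of $(t+u)^{-2}\leqslant(t-u)^{-2}$ as above.

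The $t\in(\tfrac12,1)$ regime is the main obstacle, since the one-intermediate-point argument breaks down as the set $\{u:u+t\leqslant 1\}$ has vanishing length when $t\to 1$. Here $\frac{1}{t}\leqslant 2$, so it suffices to control $\int_{1/2}^1 |f(v+t)-f(v-t)|^2\,dt$. My resolution is to introduce \emph{two} intermediate points $v+s$ and $v-r$ with $(s,r)$ ranging over the triangle $\{s,r>0,\,s+r\leqslant 1\}$ of area $\tfrac12$, and apply $|f(v+t)-f(v-t)|^2\leqslant 3|f(v+t)-f(v+s)|^2+3|f(v+s)-f(v-r)|^2+3|f(v-r)-f(v-t)|^2$ averaged uniformly over this triangle. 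After integrating in $t\in(\tfrac12,1)$, the first and third terms are bounded by the edge energies $A_v$ and $A_{v-1}$ (using the trivial $|f|^2\leqslant|f|^2/|t-s|^2$ whenever $|t-s|\leqslant 1$), while the middle term is bounded by $\mathcal{S}_v$ (using $|f|^2\leqslant|f|^2/(s+r)^2$ whenever $s+r\leqslant 1$). Combining the two $t$-ranges yields $\mathcal{V}_v \lesssim \mathcal{S}_v + A_v + A_{v-1}$, and summing over vertices completes the equivalence of norms.
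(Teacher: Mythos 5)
Your proposal is correct and follows essentially the same route as the paper: decompose $\tH^{1/2}(\dR)$ into same-edge and adjacent-edge (vertex) contributions and adapt the intermediate-point argument of Theorem \ref{thm:2.2b} to finite edges in both directions. The only real difference is one technical step: the paper bounds the gluing term at $k$ by the full double integral over $[k-1,k+1]^2$ (so comparisons of distance up to $2$ appear) and then controls the region $1\leqslant|x-y|\leqslant 2$ by a routine estimate, whereas you keep every comparison within distance $1$ by splitting the gluing integral at $t=\tfrac 1 2$ and, for $t\in(\tfrac 1 2,1)$, averaging over two intermediate points in the triangle $\{s,r>0,\ s+r\leqslant 1\}$ --- a legitimate, self-contained way of accomplishing the same reduction.
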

\begin{proof}
The first term on the right side of (\ref{eq:2.9}) is clearly bounded by 
$\|f\|_{\tH^{1/2}(\dR)}^2$. For the second term we note that an argument as in 
the proof of Theorem \ref{thm:2.2b} gives the estimate 
\[
  \int_0^1 \frac{|f(k+t)-f(k-t)|^2}{t}\, dt\leqslant c\int_{k-1}^{k+1}\int_{k-1}^{k+1} \frac{|f(x)-f(y)|^2}{|x-y|^2}\, dxdy\text{,}
\]
and summing over $k\in\dZ$ we obtain 
\[
  \sum_{k\in\dZ} \int_0^1 \frac{|f(k+t)-f(k-t)|^2}{t}\, dt \leqslant c \iint_{|x-y|\leqslant 2} \frac{|f(x)-f(y)|^2}{|x-y|^2}\, dxdy\text{.}
\]
A straightforward estimate controls the integral over 
$1\leqslant |x-y|\leqslant 2$ by a multiple of the integral over 
$|x-y|\leqslant 1$, so we have 
\[
  \|f\|_{H^{1/2}(\dZ)}^2\leqslant c \|f\|_{\tH^{1/2}(\dR)}^2\text{.}
\]
For the reverse estimate we use an argument in the proof of Theorem 
\ref{thm:2.2b} to obtain 
\begin{align*}
  \int_{k-1}^k \int_k^{k+1} \frac{|f(x)-f(y)|^2}{|x-y|^2}\, dxdy 
    &\leqslant \int_{k-1}^k\int_{k-1}^k \frac{|f(x)-f(y)|^2}{|x-y|^2}\, dxdy \\
    &+ \int_k^{k+1}\int_k^{k+1} \frac{|f(x)-f(y)|^2}{|x-y|^2}\, dxdy \\
    &+ \int_0^t \frac{|f(k+t)-f(k-t)|^2}{t}\, dt
\end{align*}
and then sum over $k\in\dZ$. 
\end{proof}

\begin{example}\label{ex:4}
Let $G$ be the square graph $\sq_\delta$ with side length $\delta$. So 
$\sq_\delta$ has $4$ vertices that we will identify with the points $(0,0)$, 
$(\delta,0)$, $(\delta,\delta)$, $(0,\delta)$ in the plane and $4$ edges of 
length $\delta$. Then 
\begin{equation}\label{eq:2.10}
\begin{aligned}\
  \|f\|_{H^{1/2}(\sq_\delta)}^2 
    &= \int_0^\delta\int_0^\delta \frac{|f(x,0)-f(y,0)|^2}{|x-y|^2}\, dxdy + \int_0^\delta\int_0^\delta \frac{|f(\delta,x)-f(\delta,y)|^2}{|x-y|^2}\, dxdy \\
    &+ \int_0^\delta\int_0^\delta\frac{|f(x,\delta)-f(y,\delta)|^2}{|x-y|^2}\, dxdy + \int_0^\delta\int_0^\delta\frac{|f(0,x)-f(0,y)|^2}{|x-y|^2}\, dxdy \\
    &+ \int_0^\delta |f(x,0)-f(0,x)|^2\, \frac{dx}{x} + \int_0^\delta |f(x,0)-f(\delta,\delta-x)|^2\, \frac{dx}{x} \\
    &+ \int_0^\delta |f(x,\delta)-f(\delta,x)|^2\, \frac{dx}{x} + \int_0^\delta |f(0,x)-f(\delta-x,\delta)|^2\, \frac{dx}{x}\text{.}
\end{aligned}
\end{equation}
Although the $H^{1/2}(\sq_\delta)$ norm does not involve comparisons between 
values on opposite edges, it is not difficult to show bounds 
\begin{equation}\label{eq:2.11}
\begin{aligned}
  \int_0^1 |f(x,0)-f(x,\delta)|^2\, dx &\leqslant c\delta \|f\|_{H^{1/2}(\sq_\delta)}^2 \\
  \int_0^1 |f(0,y)-f(\delta,y)|^2\, dy &\leqslant c\delta \|f\|_{H^{1/2}(\sq_\delta)}^2\text{.}
\end{aligned}
\end{equation}
\end{example}

\begin{example}\label{ex:5}
Let $G$ be the graph paper graph $\gp_\delta$ with vertices at 
$\{(j\delta,k\delta)\}$, $j,k\in\dZ$ and horizontal and vertical edges of 
length $\delta$ joining $(j\delta,k\delta)$ with $((j+1)\delta,k\delta)$ and 
$(j\delta,k\delta)$ with $(j\delta,(k+1)\delta)$. The norm is given by 
\begin{equation}\label{eq:2.12}
\begin{aligned}
  \|f\|_{H^{1/2}(\gp_\delta)}^2 &= \sum_j \sum_k \int_0^\delta\int_0^\delta \frac{|f(j\delta+x,k\delta)-f(j\delta+y,k\delta)|^2}{|x-y|^2}\, dxdy \\
    &+ \sum_j\sum_k\int_0^\delta\int_0^\delta \frac{|f(j\delta,k\delta+x)-f(j\delta,k\delta+y)|^2}{|x-y|^2}\, dxdy \\
    &+ \sum_j\sum_k\int_{-\delta}^\delta |f(j\delta+x,k\delta)-f(j\delta,k\delta+x)|^2\, \frac{dx}{|x|} \\
    &+ \sum_j\sum_k \int_0^\delta |f(j\delta+x,k\delta)-f(j\delta-x,k\delta)|^2\, \frac{dx}{x} \\
    &+ \sum_j\sum_k \int_0^\delta |f(j\delta,k\delta+x)-f(j\delta,k\delta-x)|^2\, \frac{dx}{x}\text{.}
\end{aligned}
\end{equation}
Of course we could get an equivalent norm by deleting the last two sums in 
(\ref{eq:2.12}), as they are controlled by the third sum. We may regard 
$\gp_\delta$ as a countable union of square graphs $\sq_\delta$,and it is 
easily seen that $f\in H^{1/2}(\gp_\delta)$ if and only if the restriction of 
$f$ to each of the square graphs is in $H^{1/2}(\sq_\delta)$ with the sum of 
the squares of the norms $\|f\|_{H^{1/2}(\sq_\delta)}^2$ finite, and this 
gives an equivalent norm. 
\end{example}

\section{Traces of functions of finite energy}\label{sec:3}

Consider the homogeneous Sobolev space $H^1(\dR^2)$ of functions with finite 
energy 
\begin{equation}\label{eq:3.1}
  \|F\|_{H^1(\dR)}^2 = \int_{\dR^2} |\nabla F(x,y)|^2\, dxdy\text{.}
\end{equation}
These form a Hilbert space modulo constants. Functions of finite energy do not 
have to be continuous, as the example $F(x,y)=\log|\log(x^2+y^2)|$ (multiplied 
by an appropriate cutoff function) shows. However, it is well-known that these 
functions have well-defined traces on straight lines that are in 
$H^{1/2}(\dR)$, and $H^{1/2}(\dR)$ is the exact space of traces. Since the 
usual treatment of traces involves inhomogeneous Sobolev spaces we give the 
proof for the convenience of the reader. We omit the routine step of actually 
defining the traces and just prove the norm estimates. 

\begin{theorem}\label{thm:3.1}
The trace map $T:H^1(\dR)\to H^{1/2}(\dR)$ given formally by $TF(x)=F(x,0)$ is 
continuous, 
\begin{equation}\label{eq:3.2}
  \|T F\|_{H^{1/2}(\dR)} \leqslant c \|F\|_{H^1(\dR^2)}\text{.}
\end{equation}
Moreover there exists a continuous extension map 
$E:H^{1/2}(\dR) \to H^1(\dR^2)$ with $TEf=f$ and 
\begin{equation}\label{eq:3.3}
  \|E f\|_{H^1(\dR^2)} \leqslant c \|f\|_{H^{1/2}(\dR)}
\end{equation}
\end{theorem}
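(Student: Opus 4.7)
The plan is to work on the Fourier side, using Theorem~\ref{thm:2.2a} to translate the $H^{1/2}(\dR)$ norm of the trace into a weighted $L^2$ estimate. By a density argument it suffices to produce the two estimates for Schwartz $F$ and $f$ and then pass to the limit.

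For the trace inequality, let $\tilde F(\xi, y)$ denote the partial Fourier transform of $F$ in the $x$-variable. Plancherel in $x$ (combined with $\widehat{\partial_x F} = 2\pi i\xi\tilde F$) converts the energy into
\begin{equation*}
  \|F\|_{H^1(\dR^2)}^2 = \int_{\dR} \int_{\dR} \bigl( 4\pi^2 |\xi|^2 |\tilde F(\xi,y)|^2 + |\partial_y \tilde F(\xi,y)|^2 \bigr)\, dy\, d\xi.
\end{equation*}
Since $\widehat{TF}(\xi) = \tilde F(\xi, 0)$, Theorem~\ref{thm:2.2a} reduces the task to bounding $\int_{\dR} |\tilde F(\xi, 0)|^2 |\xi|\, d\xi$. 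For each fixed $\xi$ the fundamental theorem of calculus together with decay at infinity gives
\begin{equation*}
  |\tilde F(\xi, 0)|^2 = -\int_0^\infty \partial_y |\tilde F(\xi, y)|^2\, dy = -2\operatorname{Re} \int_0^\infty \overline{\tilde F(\xi, y)}\, \partial_y \tilde F(\xi, y)\, dy.
\end{equation*}
Multiplying by $|\xi|$ and applying $2|a||b|\leqslant |a|^2+|b|^2$ with $a=|\xi||\tilde F|$ and $b=|\partial_y \tilde F|$ yields
\begin{equation*}
  |\tilde F(\xi, 0)|^2|\xi| \leqslant \int_0^\infty \bigl(|\xi|^2|\tilde F(\xi, y)|^2 + |\partial_y \tilde F(\xi, y)|^2\bigr)\, dy,
\end{equation*}
and integrating in $\xi$ controls the left side by a constant times $\|F\|_{H^1(\dR^2)}^2$.

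For the extension, fix an even Schwartz function $\phi$ on $\dR$ with $\phi(0)=1$ and set
\begin{equation*}
  Ef(x, y) = \int_{\dR} \hat f(\xi)\, \phi(y\xi)\, e^{2\pi i x \xi}\, d\xi.
\end{equation*}
Setting $y=0$ gives Fourier inversion, so $T(Ef)=f$. Differentiating under the integral, the partial Fourier transforms in $x$ of $\partial_x Ef$ and $\partial_y Ef$ are $2\pi i\xi\hat f(\xi)\phi(y\xi)$ and $\xi\hat f(\xi)\phi'(y\xi)$. Plancherel in $x$, followed by the substitution $t=y|\xi|$ in the $y$-integral for each fixed $\xi\neq 0$, yields
\begin{equation*}
  \|Ef\|_{H^1(\dR^2)}^2 = C_\phi \int_{\dR} |\hat f(\xi)|^2|\xi|\, d\xi
\end{equation*}
for the finite constant $C_\phi = \int_{\dR} (4\pi^2|\phi(t)|^2 + |\phi'(t)|^2)\, dt$, which by Theorem~\ref{thm:2.2a} is a constant multiple of $\|f\|_{H^{1/2}(\dR)}^2$.

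The main obstacle is bookkeeping: $f$ and $F$ are defined modulo constants, so $\hat f$ is only defined modulo a multiple of $\delta_0$, and some care is needed to interpret the integral defining $Ef$ near $\xi=0$. Since $\phi(0)=1$, adding $c\delta_0$ to $\hat f$ simply adds the constant $c$ to $Ef$, which is null in the quotient, so $Ef$ is well-defined as an equivalence class in $H^1(\dR^2)$. A parallel comment applies to the trace inequality, and in both cases the preliminary reduction to Schwartz representatives is what justifies the integration by parts and the vanishing at infinity used above.
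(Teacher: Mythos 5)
Your argument is correct, and it reaches both estimates by a route that differs from the paper's in its mechanics even though both live on the Fourier side and both lean on Theorem~\ref{thm:2.2a}. For the trace bound the paper uses the full two-dimensional Fourier transform, the identity $(TF)^\wedge(\xi)=\int\hat F(\xi,\eta)\,d\eta$, and Cauchy--Schwarz in $\eta$ against the weight $\xi^2+\eta^2$ (with the explicit computation $\int\frac{d\eta}{\xi^2+\eta^2}=\frac{\pi}{|\xi|}$); you instead take the partial Fourier transform in $x$ only and run the classical energy/trace argument --- fundamental theorem of calculus in $y$ plus $2|a||b|\leqslant|a|^2+|b|^2$ --- which is slightly more elementary, avoids the explicit $\eta$-integral, and adapts immediately to the half-plane setting of Theorem~\ref{thm:3.2} without reflection. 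For the extension the paper takes the Poisson integral, i.e.\ the harmonic extension in each half-plane, whereas you use a multiplier extension $\hat f(\xi)\phi(y\xi)$ with a Schwartz bump $\phi$, $\phi(0)=1$; your scaling computation giving $\|Ef\|_{H^1}^2=C_\phi\int|\hat f(\xi)|^2|\xi|\,d\xi$ is correct and fully adequate for (\ref{eq:3.3}). What the paper's choice buys, and yours does not, is the remark exploited later (notably in the proof of Theorem~\ref{thm:4.1}b): the Poisson extension is harmonic, hence energy minimizing, so it realizes the optimal $H^1$ norm among all extensions, while your $\phi$-extension only achieves the norm up to the constant $C_\phi$. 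Your attention to the modulo-constants bookkeeping and the reduction to Schwartz representatives is at the same level of rigor as the paper, which explicitly omits the ``routine step'' of defining the trace; the only point you might flag as also routine-but-nontrivial is the density of Schwartz functions (mod constants) in the homogeneous spaces, which justifies the limiting argument and the identity $TEf=f$ for general $f$.
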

\begin{proof}
We work on the Fourier transform side, where 
\begin{align}\label{eq:3.4}
  \|F\|_{H^1(\dR^2)}^2 
    &= \int_{\dR^2} (\xi^2+\eta^2)|\hat F(\xi,\eta)|^2\, d\xi d\eta \qquad \text{and } \\ \label{eq:3.5}
  (T f)^\wedge(\xi) &= \int_{-\infty}^\infty \hat F(\xi,\eta)\, d\eta\text{.}
\end{align}
By Theorem \ref{thm:2.2a} we have 
\[
  \|T f\|_{H^{1/2}(\dR)}^2 = \int_{-\infty}^\infty \left| \int_{-\infty}^\infty \hat F(\xi,\eta)\, d\eta\right|^2\, |\xi|\, d\xi\text{.}
\]
By Cauchy-Schwarz we have 
\begin{align*}
  \left|\int_{-\infty}^\infty \hat F(\xi,\eta)\ d\eta\right|^2 
    &\leqslant \left(\int_{-\infty}^\infty (\xi^2+\eta^2) |\hat F(\xi,\eta)|^2\, d\eta\right)\left(\int_{-\infty}^\infty \frac{1}{\xi^2+\eta^2}\, d\eta\right) \\
    &= \frac{\pi}{|\xi|} \left(\int_{-\infty}^\infty (\xi^2+\eta^2) |\hat F(\xi,\eta)|^2\, d\eta\right) \qquad\text{so} \\
  \|T f\|_{H^{1/2}(\dR)}^2 
    &\leqslant \pi \int_{-\infty}^\infty \int_{-\infty}^\infty (\xi^2+\eta^2) |\hat F(\xi,\eta)|^2\, d\eta  \\
    &= \pi \|F\|_{H^1(\dR^2)}^2
\end{align*}
proving (\ref{eq:3.2}). 

Conversely, given $f\in H^{1/2}(\dR)$ define $E f=F$ by the Poisson integral 
\begin{equation}\label{eq:3.6}
  F(x,y) = \frac{|y|}{\pi} \int \frac{f(x-t)}{t^2+y^2}\, dt
\end{equation}
so that $T F=f$. Then 
\begin{equation}\label{eq:3.7}
  \hat F(\xi,\eta) = \frac{1}{\pi} \frac{\hat f(\xi) |\xi|}{\eta^2+|\xi|^2} \text{.}
\end{equation}
By (\ref{eq:3.4}) we have 
\begin{align*}
  \|F\|_{H^1(\dR^2)}^2 
    &= \frac{1}{\pi^2} \int_{\dR^2} \frac{|\hat f(\xi)|^2 |\xi|^2}{\eta^2+\xi^2}\, d\xi d\eta \\
    &= \frac{1}{\pi} \int_{-\infty}^\infty |\hat f(\xi)|^2 |\xi|\, d\xi 
\end{align*}
so we obtain (\ref{eq:3.3}) by Theorem \ref{thm:2.2a}. 
\end{proof}

Note that we define the extension $E f$ to be harmonic in each half-plane 
$y>0$ and $y<0$. Since harmonic functions minimize energy, our extension 
achieves the minimum $H^1(\dR^2)$ norm. 

There is a virtually identical trace theorem for functions of finite energy 
in the half-plane, say $y>0$ denoted $\dR_+^2$. To see this we only have to 
observe that an even reflection 
\begin{equation}\label{eq:3.8}
  R F(x,y) = F(x,-y) \qquad \text{for $y<0$}
\end{equation}
maps $H^1(\dR_+^2)$ continuously to $H^1(\dR^2)$. 

\begin{theorem}\label{thm:3.2}
The trace map $T:H^1(\dR_+^2)\to H^{1/2}(\dR)$ given formally by 
$T F(x)=F(x,0)$ is well-defined and bounded, and there exists a bounded 
extension map $E:H^{1/2}(\dR) \to H^1(\dR_+^2)$ with $T E f=f$, and the 
analogues of (\ref{eq:3.2}) and (\ref{eq:3.3}) hold. 
\end{theorem}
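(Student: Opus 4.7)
The plan is to deduce this from Theorem \ref{thm:3.1} by using the even reflection $R$ described in (\ref{eq:3.8}) as a bridge between the half-plane and the full plane. The strategy is completely standard once the reflection is shown to be bounded.

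First I would verify that $R:H^1(\dR_+^2)\to H^1(\dR^2)$ is bounded, with $\|RF\|_{H^1(\dR^2)}^2 = 2\|F\|_{H^1(\dR_+^2)}^2$. The nontrivial point here is that the distributional gradient of $RF$ on $\dR^2$ really is the (even/odd reflected) gradient of $F$ with no singular term supported on $\{y=0\}$. This is precisely because $F$ has a well-defined trace on $\{y=0\}$ (as an element of $L^2_{\mathrm{loc}}$ of the line) and reflection produces a function continuous across that line in the trace sense, so no surface measure contribution arises in the weak derivative. This is the one step where one must be a bit careful; everything else is formal manipulation.

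Granted the boundedness of $R$, the trace bound is immediate: given $F\in H^1(\dR_+^2)$, the function $RF\in H^1(\dR^2)$ has the same trace on $\{y=0\}$ as $F$, so by Theorem \ref{thm:3.1}
\[
  \|TF\|_{H^{1/2}(\dR)} = \|T(RF)\|_{H^{1/2}(\dR)} \leqslant c\|RF\|_{H^1(\dR^2)} \leqslant c'\|F\|_{H^1(\dR_+^2)}\text{,}
\]
which is the analogue of (\ref{eq:3.2}).

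For the extension direction, given $f\in H^{1/2}(\dR)$, let $\tilde E f$ be the Poisson extension constructed in (\ref{eq:3.6}) of Theorem \ref{thm:3.1}, and define $Ef$ to be its restriction to $\dR_+^2$. Then $TEf = f$ by construction, and since restriction to a half-plane can only decrease the $H^1$ norm, the analogue of (\ref{eq:3.3}) follows from (\ref{eq:3.3}) applied to $\tilde E f$. As remarked after Theorem \ref{thm:3.1}, harmonicity of the Poisson extension on $\dR_+^2$ means that this $Ef$ in fact realizes the minimal $H^1(\dR_+^2)$-norm extension of $f$, so the constant obtained is sharp. The only conceptually new ingredient beyond Theorem \ref{thm:3.1} is the reflection argument for the trace bound, and that is the one step I expect to require a small amount of care.
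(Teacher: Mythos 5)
Your proposal is correct and follows essentially the same route as the paper, which proves Theorem \ref{thm:3.2} precisely by observing that the even reflection (\ref{eq:3.8}) maps $H^1(\dR_+^2)$ boundedly into $H^1(\dR^2)$ and then invoking Theorem \ref{thm:3.1}, with the extension given by restricting the Poisson extension (\ref{eq:3.6}) to the half-plane. Your extra care about the absence of a singular contribution on $\{y=0\}$ in the weak derivative of $RF$ is a correct and welcome elaboration of a point the paper leaves implicit.
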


If we combine this with the well-known observation that energy is conformally 
invariant in the plane (not true in other dimensions, however), we obtain a 
powerful tool for obtaining trace theorems for other domains: find a conformal 
map between the domain and the half-space $\dR_+^2$, and transfer the 
$H^{1/2}(\dR)$ norm from the boundary of $\dR_+^2$ to the boundary of the 
domain, assuming the conformal map extends continuously to the boundary. 

A simple example is the strip $S=\{(x,y) : 0<y<\pi\}$. In complex notation 
$\varphi(z) = \log z$ is the conformal map from $\dR_+^2$ to $S$, with 
$\psi(z) = e^z$ its inverse. So $F\in H^1(S)$ if and only if 
$F\circ\varphi\in H^1(\dR_+^2)$ with equal norms. Then 
$f(t)=F(\varphi(t))\in H^{1/2}(\dR)$. Using Theorem \ref{thm:2.2a} this means 
\begin{equation}\label{eq:3.9}
\begin{aligned}
  \int_0^\infty \int_0^\infty \frac{|F(\log t)-F(\log s)|^2}{|t-s|^2}\, dtds 
    &+ \int_0^\infty\int_0^\infty \frac{|F(\log t+i\pi)-F(\log s+i\pi)|^2}{|t-s|^2}\, dtds \\
    &+ \int_0^\infty |F(\log t)-F(\log t+i\pi)|^2\, \frac{dt}{t} \\
    &\leqslant c \|F\|_{H^1(S)}^2 \text{.}
\end{aligned}
\end{equation}
The change of variable $x=\log t$, $y=\log s$ transforms the let hand side of 
(\ref{eq:3.9}) into 
\begin{equation}\label{eq:3.10}
\begin{aligned}
  \int_{-\infty}^\infty \int_{-\infty}^\infty |F(x)-F(y)|^2 \frac{e^x e^y}{|e^x-e^y|^2}\, dxdy 
    &+ \int_{-\infty}^\infty\int_{-\infty}^\infty |F(x+y)-F(x+i\pi)|^2 \frac{e^x e^y}{|e^x-e^y|}\, dxdy \\
    &+ \int_{-\infty}^\infty |F(x)-F(x+i\pi)|^2\, dx\text{.}
\end{aligned}
\end{equation}

To simplify the notation we split the trace of $F$ on the boundary of $S$ into 
two pieces, $T_0 F(x) = F(x)$ and $T_1 F(x) = F(x+i\pi)$, so that $T_0 F$ and 
$T_1 F$ are functions on $\dR$. 

\begin{theorem}\label{thm:3.3}
If $F\in H^1(S)$ then $T_0 F$ and $T_1 F$ are in $\tH^{1/2}(\dR)$ and 
$T_0 F-T_1 F\in L^2(\dR)$, with 
\begin{equation}\label{eq:3.11}
  \|T_0 F\|_{\tH^{1/2}(\dR)}^2 + \|T_1 F\|_{\tH^{1/2}(\dR)}^2 + \|T_0 F - T_1 F\|_2^2 \leqslant c \|F\|_{H^1(S)}\text{.}
\end{equation}
Conversely, given $f_0$ and $f_1$ in $\tH^{1/2}(\dR)$ with 
$f_0-f_1\in L^2(\dR)$, there exists $F=E(f_0,f_1)$ with $T_0 F=f_0$, 
$T_1 F = f_1$, $F\in H^1(S)$ with the reverse estimate of (\ref{eq:3.11}) 
holding. 
\end{theorem}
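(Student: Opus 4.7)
The idea is to deduce both directions from the half-plane trace theorem (Theorem~\ref{thm:3.2}) by pulling back along the conformal map $\varphi(z)=\log z\colon\dR_+^2\to S$, continuing the computation begun in (\ref{eq:3.9})--(\ref{eq:3.10}). For the forward direction, given $F\in H^1(S)$, set $G=F\circ\varphi\in H^1(\dR_+^2)$ (with equal energies by conformal invariance), so by Theorem~\ref{thm:3.2} its trace $g$ on $\dR$ lies in $H^{1/2}(\dR)$ with $\|g\|_{H^{1/2}(\dR)}\leqslant c\|F\|_{H^1(S)}$. Splitting the integral $\iint|g(u)-g(v)|^2/|u-v|^2\,du\,dv$ by the signs of $u,v$ and substituting $u=\pm e^x$, $v=\pm e^y$, one obtains three non-negative pieces involving $f_0$ alone, $f_1$ alone, and the cross pair $(f_0,f_1)$, with respective kernels $(4\sinh^2((x-y)/2))^{-1}$ (same-sign) and $(4\cosh^2((x-y)/2))^{-1}$ (opposite sign), via the identities $e^{x+y}/(e^x\mp e^y)^2=1/(e^{(x-y)/2}\mp e^{(y-x)/2})^2$.

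Since $1/\sinh^2(u/2)\geqslant c/u^2$ for $|u|\leqslant 1$, the same-sign pieces dominate $\|f_0\|^2_{\tH^{1/2}(\dR)}$ and $\|f_1\|^2_{\tH^{1/2}(\dR)}$. To extract $\|f_0-f_1\|^2_{L^2(\dR)}$ from the mixed piece I would use the pointwise inequality $|f_0(x)-f_1(x)|^2\leqslant 2|f_0(x)-f_1(y)|^2+2|f_1(y)-f_1(x)|^2$, integrate in $y$ against the probability kernel $(4\cosh^2((x-y)/2))^{-1}$ (whose total mass in $y$ is a universal constant), and then integrate in $x$; the first resulting piece is the mixed term and the second is bounded by $\|f_1\|^2_{\tH^{1/2}}$ since $1/\cosh^2\leqslant 1/\sinh^2$. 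This gives (\ref{eq:3.11}). For the converse, I would define $g$ on $\dR$ by $g(t)=f_0(\log t)$ for $t>0$ and $g(-s)=f_1(\log s)$ for $s>0$, extend to $G\in H^1(\dR_+^2)$ via Theorem~\ref{thm:3.2}, and set $F=G\circ\varphi^{-1}$, i.e.\ $F(z)=G(e^z)$. The substantive step is the reverse estimate $\|g\|^2_{H^{1/2}(\dR)}\leqslant C(\|f_0\|^2_{\tH^{1/2}}+\|f_1\|^2_{\tH^{1/2}}+\|f_0-f_1\|^2_2)$: split each same-sign term by $|x-y|\leqslant 1$ versus $|x-y|>1$, and split the mixed term via $|f_0(x)-f_1(y)|^2\leqslant 2|f_0(x)-f_0(y)|^2+2|f_0(y)-f_1(y)|^2$, whose second summand integrates to $C\|f_0-f_1\|^2_2$ against the $\cosh^{-2}$ kernel.

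The main obstacle is the large-$|x-y|$ range in the reverse estimate: an integral such as $\iint_{|x-y|>1}|f_0(x)-f_0(y)|^2 e^{-|x-y|}\,dx\,dy$ resists a triangle-inequality reduction to $\|f_0\|_2^2$, because $\tH^{1/2}$ functions need not lie in $L^2$ (e.g.\ $\log|x|$). The remedy is Plancherel: the integral equals $\int|\hat f_0(\xi)|^2 K(\xi)\,d\xi$ where $K(\xi)=\int_{|u|>1}|e^{2\pi i\xi u}-1|^2 e^{-|u|}\,du\leqslant C\min(\xi^2,1)$, which is exactly the Fourier weight appearing in (\ref{eq:2.5}) that characterizes $\tH^{1/2}(\dR)$ via Theorem~\ref{thm:2.2a}(b), so the estimate closes. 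The hypothesis $f_0-f_1\in L^2$ plays its role only in the opposite-sign piece, preventing the two traces from drifting apart at low frequencies; the same-sign pieces use only the $\tH^{1/2}$ hypothesis.
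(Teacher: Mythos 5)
Your proof is correct, and it shares the paper's skeleton: conformal transfer to the half-plane via $\varphi(z)=\log z$, a change of variables producing the kernel $\tfrac14\sinh^{-2}\bigl(\tfrac{x-y}{2}\bigr)$ on each boundary line, and a comparison of that kernel with the $\tH^{1/2}(\dR)$ norm. The genuine differences are two. First, the cross term: the paper never meets the opposite-sign (cosh) kernel at all, because it writes the half-plane trace norm in the broken form of Example \ref{ex:2}, using Theorem \ref{thm:2.2b} so that the interaction between the two half-axes is condensed in advance into the gluing integral $\int_0^\infty|f(t)-f(-t)|^2\,\frac{dt}{t}$, which under $t=e^x$ becomes exactly $\|T_0F-T_1F\|_{L^2}^2$ (this is the content of (\ref{eq:3.9})--(\ref{eq:3.10})); you instead split the full $H^{1/2}(\dR)$ double integral by signs and extract, respectively control, the $L^2$ difference by averaging against the probability kernel $\tfrac14\cosh^{-2}\bigl(\tfrac{x-y}{2}\bigr)$ — slightly longer, but self-contained and independent of Theorem \ref{thm:2.2b}. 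Second, where the paper dismisses the tail bound $\iint_{|x-y|\geqslant 1}|f(x)-f(y)|^2\,\bigl|\sinh\bigl(\tfrac{x-y}{2}\bigr)\bigr|^{-2}dx\,dy\leqslant c\|f\|_{\tH^{1/2}(\dR)}^2$ as ``a routine exercise because of the exponential decay,'' you actually prove it, and your Plancherel argument ($K(\xi)\leqslant C\min(\xi^2,1)$, matched against the weight in (\ref{eq:2.5}) through Theorem \ref{thm:2.2a}(b)) is a clean way to do it; you are also right that a crude reduction to $\|f\|_{L^2}^2$ is unavailable for $\tH^{1/2}$ functions. One small remark: in the forward direction, bounding $\iint|f_1(x)-f_1(y)|^2\cosh^{-2}\bigl(\tfrac{x-y}{2}\bigr)dx\,dy$ ``by $\|f_1\|_{\tH^{1/2}}^2$'' already invokes that tail estimate; it is bounded more directly by the same-sign piece of $\|g\|_{H^{1/2}(\dR)}^2$, so the forward direction in fact needs nothing beyond Theorem \ref{thm:3.2} and the sign-splitting identity.
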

\begin{proof}
In view of (\ref{eq:3.10}) it suffices to show that 
\begin{equation}\label{eq:3.12}
  \int_{-\infty}^\infty\int_{-\infty}^\infty |F(x)-F(y)|^2 \frac{e^x e^y}{|e^xe^y|}{|e^x-e^y|^2}\, dxdy
\end{equation}
is bounded above and below by a constant multiple of 
\begin{equation}\label{eq:3.13}
  \iint_{|x-y|\leqslant 1} \frac{|F(x)-F(y)|^2}{|x-y|^2}\, dxdy = \|F\|_{\tH^{1/2}(\dR)}\text{.}
\end{equation}
Note that we may rewrite (\ref{eq:3.12}) as 
\begin{equation}\label{eq:3.14}
  \frac 1 4 \int_{-\infty}^\infty \int_{-\infty}^\infty \frac{|F(x)-F(y)|^2}{\left|\sinh\left(\frac{x-y}{2}\right)\right|^2}\, dxdy\text{.}
\end{equation}
It is clear that (\ref{eq:3.14}) is bounded below by a multiple of 
(\ref{eq:3.13}), and for the upper bound we need 
$\iint_{|x-y|\geqslant 1} \frac{|f(x)-f(y)|^2}{\left|\sinh\left((x-y)/2\right)\right|^2}$ 
bounded above by a multiple of (\ref{eq:3.13}), but this is a routine 
exercise because of the exponential decay of 
$\left|\sinh\left(\frac{x-y}{2}\right)\right|^{-2}$. 
\end{proof}

It might seem perplexing that the trace space on each of the lines is larger 
than $H^{1/2}(\dR)$, since in particular this implies that there are functions 
in $H^1(S)$ that do not extend to $H^1(\dR^2)$. However, it is easy to give an 
example of such a function: just take $F(x,y) = g(x)$ where $g(0) = 0$ for 
$x\leqslant 0$ and $g(x) = 1$ for $x\geqslant 1$ and $g$ is smooth in 
$[0,1]$. Then $\nabla F$ has compact support in $S$ so $F\in H^{1/2}(S)$, but 
$g\notin H^{1/2}(\dR)$. 

Another simple example is the first quadrant 
$Q=\{(x,y) : x>0\text{ and }y>0\}$. Then $\varphi(z) = \sqrt z$ is the 
conformal map of $\dR_+^2$ to $Q$, with inverse $\psi(z)=z^2$. Again it is 
convenient to split the trace into two parts mapping to functions on $\dR_+$, 
namely $T_0F(x) = F(x,0)$ and $T_1 F(x) = F(0,x)$. Since $F\in H^1(Q)$ if and 
only if $F\circ\varphi\in H^1(\dR_+^2)$, again by Theorem \ref{thm:2.2a} we 
have the expression 
\begin{equation}\label{eq:3.15}
\begin{aligned}
  \int_0^\infty\int_0^\infty\frac{|T_0 F(\sqrt t)-T_0 F(\sqrt s)|^2}{|t-s|^2}\, dsdt 
    &+ \int_0^\infty\int_0^\infty \frac{|T_1 F(\sqrt t)-T_1 F(\sqrt s)|^2}{|t-s|^2}\, dsdt \\
    &+ \int_0^\infty |T_0 F(\sqrt t)-T_1 F(\sqrt t)|^2\, \frac{dt}{t}
\end{aligned}
\end{equation}
for the trace norm. With the substitutions $t=x^2$, $s=y^2$ this becomes 
\begin{equation}\label{eq:3.16}
\begin{aligned}
  4 \int_0^\infty\int_0^\infty \frac{|T_0 F(x)-T_0 F(y)|^2}{|x-y|^2}\, \frac{xy}{|x+y|^2}\, dxdy 
    &+ 4\int_0^\infty\int_0^\infty\frac{|T_1 F(x)-T_1 F(y)|^2}{|x-y|^2}\, \frac{xy}{|x+y|^2}\, dxdy \\
    &+ 2\int_0^\infty |T_1 F(x)-T_1 F(x)|^2\, \frac{dx}{x}\text{.}
\end{aligned}
\end{equation}
It is easy to see that if $f_0,f_1\in H^{1/2}(\dR_+)$ and 
\begin{equation}\label{eq:3.17}
  \int_0^\infty |f_0(x)-f_1(x)|^2\, \frac{dx}{x} < \infty
\end{equation}
then there exists $F\in H^1(Q)$ with $T_0 F=f_0$ and $T_1 F=f_1$, because 
$\frac{xy}{|x+y|^2}$ is bounded. In other words, the function 
\[
  f(x) = \begin{cases}
           f_0(x) & \text{if $x>0$} \\
           f_1(x) & \text{if $x<0$}
         \end{cases}
\]
is in $H^{1/2}(\dR)$, and $\|F\|_{H^1(Q)} \leqslant c \|f\|_{H^{1/2}(\dR)}$. 
It is possible to show the converse statement as well, but this involves some 
technicalities since $\frac{xy}{|x+y|^2}$ is not bounded below. It is easier 
to observe that $F\in H^1(Q)$ may be extended by even reflection across the 
axes to a function in $H^1(\dR^2)$, so the even reflections of $T_0 F$ and 
$T_1 F$ must be in $H^{1/2}(\dR^2)$, so $T_0 F$ and $T_1 F$ must be in 
$H^{1/2}(\dR_+)$, and we already have (\ref{eq:3.17}) for $f_0=T_0 F$, 
$f_1=T_1 F$. A direct proof of (\ref{eq:3.17}) is possible but involves 
technicalities. 

Another simple example is the unit disk $D$, with 
$\varphi(z) = \frac{1-z}{1+z}$ the conformal mapping of $\dR_+^2$ to $D$. 
The trace space of $H^1(D)$ is $H^{1/2}(C)$ for $C$ the unit circle with norm 
\begin{equation}\label{eq:3.18}
  \|f\|_{H^{1/2}(C)}^2 = \int_0^{2\pi}\int_0^{2\pi} \frac{|f(e^{i\theta})-f(e^{i\theta'})|^2}{4\left|\sin \frac 1 2(\theta-\theta')\right|^2}\, d\theta d\theta' \text{.}
\end{equation}
Of course $2\left|\sin \frac 1 2(\theta-\theta')\right|$ is exactly the 
chordal distance $|e^{i\theta}-e^{i\theta'}|$. It is interesting to observe 
that exactly the same trace space arises from the exterior of the circle 
$\{|z|>1\}$, as $z\mapsto 1/\bar z$ is an anticonformal map of $D$ to this 
exterior domain that agrees with $\varphi(z)$ on the circle. Similarly, for a 
circle $C_r$ of radius $r$, the analog of (\ref{eq:3.18}) is 
\begin{equation}\label{eq:3.19}
  \|f\|_{H^{1/2}(C_r)}^2 = \int_0^{2\pi}\int_0^{2\pi} \frac{|f(re^{i\theta}-f(re^{i\theta'})|^2}{4\left|r\sin \frac 1 2(\theta-\theta')\right|^2}\, rd\theta r d\theta'\text{.}
\end{equation}

Of course it is not necessary to use a conformal map $\varphi$. A Lipschitz 
map or even a quasiconformal map changes the $H^1$ norm by a bounded amount. 
So for $\sq_\delta^\circ$, the interior of the square $\sq_\delta$, the trace 
space of $H^1(\sq_\delta^\circ)$ is $H^{1/2}(\sq_\delta)$ with norm given by 
(\ref{eq:2.10}), since one can ``square the circle'' with a Lipschitz map. 

Next we consider traces on infinite collections of lines. First consider the 
horizontal line collection $\hlc=\{(x,n\pi) : x\in \dR,n\in\dZ\}$. For a 
function $F$ in $H^1(\dR^2)$ define the traces $T_n F(x)=F(x,\pi n)$. 

\begin{theorem}\label{thm:3.4}
A set of functions $\{f_n\}$ on $\dR$ are the traces $f_n=T_n F$ for 
$F\in H^1(\dR^2)$ if and only if $f_n\in \tH^{1/2}(\dR)$ and 
$f_n-f_{n+1}\in L^2(\dR)$ with 
\begin{equation}\label{eq:3.20}
  \sum_n \|f_n\|_{\tH^{1/2}(\dR)}^2 + \sum_n \|f_n-f_{n+1}\|_{L^2(\dR)}^2 < \infty\text{,}
\end{equation}
and the corresponding norm equivalence holds.
\end{theorem}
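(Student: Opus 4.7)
The plan is to reduce Theorem \ref{thm:3.4} to Theorem \ref{thm:3.3} by decomposing the plane into horizontal strips $S_n = \{(x,y) : n\pi < y < (n+1)\pi\}$, each of which is a translate of the strip $S$ treated before. Since the $S_n$ cover $\dR^2$ and overlap only on the lines of $\hlc$, we have the clean identity
\begin{equation*}
  \|F\|_{H^1(\dR^2)}^2 = \sum_{n\in\dZ} \|F\|_{H^1(S_n)}^2\text{.}
\end{equation*}

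For the forward direction, suppose $F\in H^1(\dR^2)$ and set $f_n=T_n F$. Applying Theorem \ref{thm:3.3} (after vertical translation) to $F\!\mid_{S_n}$, whose two boundary traces are $f_n$ and $f_{n+1}$, gives
\begin{equation*}
  \|f_n\|_{\tH^{1/2}(\dR)}^2 + \|f_{n+1}\|_{\tH^{1/2}(\dR)}^2 + \|f_n-f_{n+1}\|_{L^2(\dR)}^2 \leqslant c\,\|F\|_{H^1(S_n)}^2\text{.}
\end{equation*}
Summing over $n\in\dZ$, each $\|f_n\|_{\tH^{1/2}(\dR)}^2$ is counted at most twice, yielding (\ref{eq:3.20}) bounded by $c\,\|F\|_{H^1(\dR^2)}^2$.

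For the converse, suppose $\{f_n\}$ satisfies (\ref{eq:3.20}). For each $n$ apply the extension half of Theorem \ref{thm:3.3} (again after translation) to the pair $(f_n,f_{n+1})$ to obtain $F_n\in H^1(S_n)$ with $T_n F_n=f_n$, $T_{n+1}F_n=f_{n+1}$, and
\begin{equation*}
  \|F_n\|_{H^1(S_n)}^2 \leqslant c\bigl(\|f_n\|_{\tH^{1/2}(\dR)}^2 + \|f_{n+1}\|_{\tH^{1/2}(\dR)}^2 + \|f_n-f_{n+1}\|_{L^2(\dR)}^2\bigr)\text{.}
\end{equation*}
Define $F$ on $\dR^2$ by setting $F=F_n$ on each $S_n$. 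Summing the above and using (\ref{eq:3.20}) gives $\sum_n\|F_n\|_{H^1(S_n)}^2<\infty$, so $F$ is locally integrable with square-integrable piecewise gradient. The traces of $F_{n-1}$ and $F_n$ on the shared line $y=n\pi$ both equal $f_n$, so there is no distributional contribution along the gluing lines; this promotes the piecewise gradient to the distributional gradient, giving $F\in H^1(\dR^2)$ with the desired reverse estimate.

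The only delicate step is this last gluing argument: one must check that matching $\tH^{1/2}(\dR)$ traces on each horizontal line are enough to ensure that the distributional gradient on $\dR^2$ has no singular part supported on $\hlc$. The cleanest way is to test against $\varphi\in C_c^\infty(\dR^2)$: an integration by parts on each strip produces boundary terms $\int_\dR (F_{n-1}-F_n)(x,n\pi)\varphi(x,n\pi)\,dx$, which vanish because the traces agree in $\tH^{1/2}(\dR)\subset L^2_{\mathrm{loc}}(\dR)$. Everything else is a direct application of Theorem \ref{thm:3.3} termwise in $n$.
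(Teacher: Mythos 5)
Your proposal is correct and follows essentially the same route as the paper: decompose $\dR^2$ into the horizontal strips, apply Theorem \ref{thm:3.3} to each strip in both directions, sum, and use the fact that matching traces on the gluing lines ensure the piecewise-defined extension lies in $H^1(\dR^2)$. The paper states this gluing fact without proof, whereas you spell out the integration-by-parts verification; otherwise the arguments coincide.
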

\begin{proof}
Basically we just have to apply Theorem \ref{thm:3.3} to each of the strips 
$\{n\pi < y < (n+1)\pi\}$ and sum (\ref{eq:3.11}) over all the strips. To do 
this we just have to observe that a function belongs to $H^1(\dR^2)$ if and 
only if its restriction to each strip is in $H^1$ of that strip, the traces 
agree on neighboring strips, and the sum of the energies is finite. 
\end{proof}

There is something a bit unsettling about this result. We know that 
$f_n=T_n F$ actually belongs to the smaller space $H^{1/2}(\dR)$ for 
$F\in H^1(\dR^2)$, yet this space plays no role in the characterization 
(\ref{eq:3.20}). It is an indirect consequence of the theorem that if 
$\{f_n\}$ is a family of functions satisfying (\ref{eq:3.20}), then each $f_n$ 
is indeed in $H^{1/2}(\dR)$. It should be possible to prove this directly, but 
again this seems rather technical. Note that we only get a uniform bound for 
$\|f_n\|_{H^{1/2}(\dR)}^2$. The following example shows that we can't do too 
much better than this (most likely $\|f_n\|_{H^{1/2}(\dR)}^2 = o(1)$). 

Consider the function $F(x,y)=(1+x^2+y^2)^{-\alpha}$ for $\alpha>0$. A direct 
computation shows that 
$|\nabla F(x,y)|\leqslant 2\alpha (1+x^2+x^y)^{-\alpha-\frac 1 2}$, so 
$F\in H^1(\dR^2)$. Now 
\[
  T_n F(x) = (1+\pi^2 n^2 + x^2)^{-\alpha} 
           = (1+\pi^2 n^2)^{-\alpha} g\left(\frac{x}{\sqrt{1+\pi^2 n^2}}\right)
\]
for $g(x) = (1+x^2)^{-\alpha}$. It is easy to see that $g\in H^{1/2}(\dR)$, 
so by dilation invariance of the $H^{1/2}(\dR)$ norm we see that 
$\|T_n F\|_{H^{1/2}(\dR)}^2 = c(1+\pi^2 n^2)^{-2\alpha}$ so 
$\sum \|T_n F\|_{H^{1/2}(\dR)}^2 = \infty$ for $\alpha\leqslant \frac 1 4$. 

Next we consider the trace on the graph paper graph $\gp_\delta$. 

\begin{theorem}\label{thm:3.5}
The trace space of $H^1(\dR^2)$ on $\gp_\delta$ is exactly $H^1(\gp_\delta)$ 
with norm given by (\ref{eq:2.12}). 
\end{theorem}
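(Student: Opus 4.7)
The plan is to reduce to a single cell via two compatible decompositions. Write $Q_{j,k}=(j\delta,(j+1)\delta)\times(k\delta,(k+1)\delta)$ for the open cells cut out by $\gp_\delta$. Trivially $\|F\|_{H^1(\dR^2)}^2=\sum_{j,k}\|F\|_{H^1(Q_{j,k})}^2$; on the other hand, the remark at the end of Example \ref{ex:5} asserts that $\|f\|_{H^{1/2}(\gp_\delta)}^2$ is equivalent to $\sum_{j,k}\|f|_{\partial Q_{j,k}}\|_{H^{1/2}(\sq_\delta)}^2$. Granting both, the theorem reduces to a single-cell trace equivalence between $H^1(Q^\circ)$ and $H^{1/2}(\partial Q)$ for $Q=\sq_\delta^\circ$, which is exactly what is claimed in the paragraph preceding Theorem \ref{thm:3.5}: using a bi-Lipschitz homeomorphism of $\sq_\delta^\circ$ with the open disk $D$, one transfers the trace characterization of $H^1(D)$ (with circle trace norm (\ref{eq:3.18}), obtained from Theorem \ref{thm:3.2} via the Cayley map of $\dR_+^2$ onto $D$) to the square with norm (\ref{eq:2.10}), producing bounded trace and extension operators $T_Q : H^1(Q^\circ)\to H^{1/2}(\sq_\delta)$ and $E_Q : H^{1/2}(\sq_\delta)\to H^1(Q^\circ)$ with $T_Q E_Q=\mathrm{id}$.

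With these single-cell operators the forward direction of Theorem \ref{thm:3.5} is immediate: for $F\in H^1(\dR^2)$ apply $T_Q$ on each cell, square, sum, and invoke the cell decomposition. For the converse, given $f\in H^{1/2}(\gp_\delta)$ I define $F$ piecewise by $F|_{Q_{j,k}}:=E_{Q_{j,k}}\bigl(f|_{\partial Q_{j,k}}\bigr)$. The extensions on two adjacent cells agree in trace along their shared edge, since both equal the restriction of $f$ to that edge, so the patched function has no distributional jump across any line of $\gp_\delta$; its distributional gradient therefore coincides with the cell-wise classical gradient and lies in $L^2(\dR^2)$, with total energy bounded by a constant times $\|f\|_{H^{1/2}(\gp_\delta)}^2$.

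The main obstacle is the cell-decomposition equivalence for $\|\cdot\|_{H^{1/2}(\gp_\delta)}$ asserted parenthetically in Example \ref{ex:5}. The direction bounding each $\|f|_{\partial Q_{j,k}}\|_{H^{1/2}(\sq_\delta)}^2$ by a local slice of (\ref{eq:2.12}) is transparent, since each integrand in (\ref{eq:2.10}) also appears in (\ref{eq:2.12}). The reverse direction requires controlling each vertex ``star'' integral in (\ref{eq:2.12})---the one comparing values on two perpendicular half-edges emanating from a vertex---by the $H^{1/2}(\sq_\delta)$ norms of the four adjacent cells, which can be done by the same triangle-inequality trick used in the proof of Theorem \ref{thm:2.2b}, splitting $f(j\delta+x,k\delta)-f(j\delta,k\delta+x)$ as a telescoping difference through an interior point of a single corner cell. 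A secondary technicality is choosing the ``square the circle'' homeomorphism so that it is genuinely bi-Lipschitz with dilatation independent of $\delta$; a standard radial stretch rescaled by $\delta$ suffices.
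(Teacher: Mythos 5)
Your proposal is correct and takes essentially the same route as the paper, whose proof is exactly this: apply the single-square trace/extension theorem (the ``square the circle'' observation preceding Theorem \ref{thm:3.5}) on each $\delta$-cell and add, using the square-by-square decomposition of the $H^{1/2}(\gp_\delta)$ norm asserted in Example \ref{ex:5}. The only small slip is where you locate the work in that decomposition: the perpendicular vertex comparison (third sum of (\ref{eq:2.12})) is already, by Definition \ref{dfn:2.1}, one of the corner terms of the adjacent square's norm (\ref{eq:2.10}), while the terms genuinely requiring an argument are the collinear comparisons (fourth and fifth sums) in one direction and the two perpendicular pairs per vertex absent from (\ref{eq:2.12}) in the other --- all of which are handled by the weighted-$L^2$ triangle inequality you invoke, so nothing breaks.
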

\begin{proof}
We simply use the trace theorem of $H^1$ on each $\delta$-square that makes 
up $\gp_\delta$ and add.
\end{proof}

In place of square graph paper we could consider triangular graph paper 
$\tgp_\delta$ consisting of the tiling of the plane by equilateral triangles 
of side length $\delta$. Then the analog of Theorem \ref{thm:3.5} holds with 
essentially the same proof.

\section{The graph paper trace characterization}\label{sec:4}

In this section we fix an integer $m\geqslant 2$, and consider the sequence of 
graph paper graphs $\gp_{m^n}$, thought of as the unions of the edges, or 
equivalently the countable union of horizontal and vertical lines in the 
plane with $m^n$ separation. These are nested subsets of the plane, 
$\gp_{m^n} \subset \gp_{m^{n'}}$ if $n'<n$ and we are interested in the limit 
as $n\to -\infty$, so the graph paper gets increasingly finer.

We let $T_n$ denote the trace map from functions defined on $\dR^2$ to 
$\gp_{m^n}$. By the nesting property we may also consider $T_n$ to be defined 
on functions on $\gp_{m^{n'}}$ with $n'<n$. Our goal is to characterize 
functions in $H^1(\dR^2)$ by their traces $T_n F$. 

\begin{theorem}\label{thm:4.1}
a) Let $F\in H^1(\dR^2)$. Then $T_n F\in H^{1/2}(\gp_{m^n})$ for all $n$ with 
uniformly bounded norms, and 
\begin{equation}\label{eq:4.1}
  \sup_{n\in\dZ} \|T_n F\|_{H^{1/2}(\gp_{m^n})}^2 \leqslant c \|F\|_{H^1(\dR)}^2
\end{equation}

b) Let $f_n\in H^{1/2}(\gp_{m^n})$ be a sequence of functions with uniformly 
bounded norms satisfying the consistency condition $T_n f_{n'} = f_n$ if 
$n'<n$. Then there exists $F\in H^1(\dR^2)$ such that $T_n F=f_n$ and 
\begin{equation}\label{eq:4.2}
  \|F\|_{H^1(\dR^2)}^2 \leqslant c \sup_{n\in\dZ} \|f_n\|_{H^{1/2}(\gp_{m^n})}^2
\end{equation}
\end{theorem}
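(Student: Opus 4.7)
The plan is to obtain both parts from Theorem \ref{thm:3.5} (the single-scale trace theorem) combined with a scaling argument in (a) and a weak-compactness argument in (b).

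For (a), Theorem \ref{thm:3.5} already gives, for each $n$,
\[
  \|T_n F\|_{H^{1/2}(\gp_{m^n})}^2 \leqslant c(m^n)\,\|F\|_{H^1(\dR^2)}^2,
\]
with an a priori scale-dependent constant. I would show $c(\delta)$ is independent of $\delta$ by exploiting two-dimensional scale invariance. The Dirichlet energy is unchanged by $F\mapsto F(\lambda\,\cdot)$, and a direct change of variables in each of the sums of (\ref{eq:2.12}) shows that $f\mapsto f(\lambda\,\cdot)$ is an isometry from $H^{1/2}(\gp_\delta)$ onto $H^{1/2}(\gp_{\delta/\lambda})$. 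Applying Theorem \ref{thm:3.5} at scale $1$ to a suitably rescaled $F$ therefore yields (\ref{eq:4.1}) with a universal constant.

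For (b), I would realize $F$ as a weak $H^1$-limit of extensions of the data. By the extension half of Theorem \ref{thm:3.5} (made scale-uniform as in (a)), for each $n$ choose $F_n\in H^1(\dR^2)$ with $T_n F_n = f_n$ and
\[
  \|F_n\|_{H^1(\dR^2)}^2 \leqslant c\,\|f_n\|_{H^{1/2}(\gp_{m^n})}^2 \leqslant cC,
\]
where $C=\sup_n\|f_n\|_{H^{1/2}(\gp_{m^n})}^2$. Viewing $H^1(\dR^2)/\dR$ as a Hilbert space under $\langle F,G\rangle = \int \nabla F\cdot\nabla G\,dxdy$, the sequence $\{F_n\}$ is bounded, so I extract a weakly convergent subsequence $F_{n_k}\rightharpoonup F$ as $n_k\to -\infty$; weak lower semicontinuity then gives (\ref{eq:4.2}). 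To identify traces I use the nesting of the grids together with the consistency hypothesis: for $n_k<n$,
\[
  T_n F_{n_k} = T_n(T_{n_k} F_{n_k}) = T_n f_{n_k} = f_n.
\]
Since $T_n\colon H^1(\dR^2)/\dR \to H^{1/2}(\gp_{m^n})/\dR$ is bounded linear by (a), it is weakly continuous, so $T_n F_{n_k}\rightharpoonup T_n F$; but the left-hand side is eventually the constant $f_n$, which forces $T_n F=f_n$.

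The main technical nuisance I expect is the quotient-by-constants bookkeeping, which the $H^1$ seminorm forces on us: to invoke Banach--Alaoglu cleanly one either fixes a normalization (for example by subtracting the average of each $F_n$ over a fixed $\gp_1$-square common to every grid) or works directly with gradients in $L^2(\dR^2,\dR^2)$ and recovers $F$ from its gradient. Beyond this, the substantive content is just the scale invariance observed in (a), which both upgrades Theorem \ref{thm:3.5} to a scale-uniform bound and furnishes the compactness needed to pass to the limit.
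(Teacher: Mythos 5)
Your proposal is correct and follows essentially the same route as the paper: extend each $f_n$ by the (scale-uniform) extension half of Theorem \ref{thm:3.5}, extract a weak $H^1(\dR^2)$ limit along $n_j\to-\infty$, get (\ref{eq:4.2}) by weak lower semicontinuity, and identify $T_nF=f_n$ from the nesting and consistency of the grids. The only difference is in the trace-identification step: you invoke the general fact that the bounded linear map $T_n$ is weak-to-weak continuous, whereas the paper verifies this weak continuity by hand on the Fourier side (testing against $G(\xi,\eta)=|\xi|H(\xi)/(\xi^2+\eta^2)$ for the trace on a single line); both are valid, and your scale-invariance remark in (a) just makes explicit what the paper leaves implicit in calling (a) an immediate consequence of Theorem \ref{thm:3.5}.
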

\begin{proof}
Part a) is an immediate consequence of Theorem \ref{thm:3.5}. To prove b) we 
define $F_n$ to be the harmonic extension of $f_n$ into each of the graph 
paper spaces. Since these harmonic extensions minimize energy, we have 
$F_n\in H^1(\dR^2)$ and 
\[
  \|F_n\|_{H^1(\dR^2)} \leqslant c \|f_n\|_{H^{1/2}(\gp_{m^n})}\text{,}
\]
again by Theorem \ref{thm:3.5}. Thus there exists a subsequence 
$n_j\to -\infty$ such that $F_{n_j}$ converges in the weak topology of 
$H^1(\dR^2)$ to a function $F$ satisfying (\ref{eq:4.2}). It remains to show 
that the weak convergence respects traces, so that $T_n F_{n_j} = f_n$ for all 
$n_j$ implies $T_n F = f_n$. 

But the equality of traces on $\gp_{m^n}$ is the same as equality of traces on 
each of the lines that make up $\gp_{m^n}$; and since all lines are 
essentially equivalent, it suffices to show that $F_{n_j}(x,0)$ converges 
weakly in $H^{1/2}(\dR)$ to $F(x,0)$. This is most easily seen on the Fourier 
transform side, where both $H^1(\dR^2)$ and $H^{1/2}(\dR)$ are just weighted 
$L^2$ spaces. 

The weak convergence $F_{n_j}\to F$ in $H^1(\dR^2)$ says 
\begin{equation}\label{eq:4.3}
  \iint \hat F_{n_j}(\xi,\eta) G(\xi,\eta)(\xi^2+\eta^2)\, d\xi d\eta \to \iint \hat F(\xi,\eta) G(\xi,\eta)(\xi^2+\eta^2)\, d\xi d\eta
\end{equation}
for every $G\in L^2\left((\xi^2+\eta^2)d\xi d\eta\right)$. The weak 
convergence $F_{n_j}(x,0)\to F(x,0)$ requires that we show 
\begin{equation}\label{eq:4.4}
  \int\left(\int \hat F_{n_j}(\xi,\eta)\, d\eta\right) H(\xi)|\xi|\, d\xi \to \int\left(\int \hat F(\xi,\eta)\, d\eta\right) H(\xi) |\xi|\, d\xi
\end{equation}
for every $H\in L^2\left(|\xi|\, d\xi\right)$. So given $H$, choose 
\begin{equation}\label{eq:4.5}
  G(\xi,\eta) = \frac{|\xi| H(\xi)}{\xi^2+\eta^2} \text{.}
\end{equation}
Since 
\begin{align*}
  \iint |G(\xi,\eta)|^2(\xi^2+\eta^2)\, d\xi d\eta 
    &= \int\left(\int \frac{|\xi|^2}{\xi^2+\eta^2}\, d\eta\right)|H(\xi)|^2\, d\xi \\
    &= \pi \int |H(\xi)|^2 |\xi|\, d\xi
\end{align*}
we may use the choice of $G$ in (\ref{eq:4.2}). But then (\ref{eq:4.3}) and 
(\ref{eq:4.4}) are identical. 
\end{proof}

This result localizes in several ways. For example, if $F\in H^1(\dR^2)$ and 
we wish to estimate the amount of energy that is contained in an open set 
$\Omega$, that is 
\begin{equation}\label{eq:4.6}
  \int_\Omega |\nabla F|^2\, dxdy\text{,}
\end{equation}
we just have to take the sum of the terms in (\ref{eq:2.12}) that correspond 
to edges contained in $\Omega$. Denote this sum by 
$\|T_n F\|_{H^{1/2}(\Omega\cap \gp_{m^n})}^2$. Then (\ref{eq:4.6}) is bounded 
above and below by a constant times 
\begin{equation}\label{eq:4.7}
  \sup_{n\in\dZ} \|T_n F\|_{H^{1/2}(\Omega\cap\gp_{m^n})} \text{.}
\end{equation}
We obtain the same norm equivalence if we only assume $F\in H^1(\Omega)$, 
meaning (\ref{eq:4.6}) is finite. (Note that this does not say anything about 
the trace of $F$ on the boundary of $\Omega$.) Also, we may start by assuming 
that $F\in H_\text{loc}^1(\dR^2)$, meaning that (\ref{eq:4.6}) is finite 
whenever $\Omega$ is bounded, and obtain the norm equivalence of 
(\ref{eq:4.6}) and (\ref{eq:4.7}). 

The same result will also hold if we replace $\gp_{m^n}$ by the triangular 
$\tgp_{m^n}$. 

It is clear that we may replace the $\sup$ in (\ref{eq:4.2}) and 
(\ref{eq:4.7}) by the $\limsup$ as $n\to-\infty$. It is not clear that a limit 
has to exist, however, since we only have estimates above and below, rather 
than identity, for our norms. 

We can also characterize functions of finite energy by their traces on pencils 
of parallel lines of equal separation; in other words, the horizontal lines in 
$\gp_{m^n}$. Denote this by $\pp_{m^n}$. We will use Theorem \ref{thm:3.4}, 
but the norms defined by (\ref{eq:3.20}) are not dilation invariant. That 
means we want to define $\tH^{1/2}(\pp_{m^n})$ by the finiteness of 
\begin{equation}\label{eq:4.8}
\begin{aligned}
  \sum_{k\in\dZ} \iint_{|x-y|\leqslant m^n} &\frac{|f(x,k m^n) - f(y,k m^n)|^2}{|x-y|^2}\, dxdy \\
    &+ \sum_{k\in\dZ} m^{-n} \int_{-\infty}^\infty |f(x,(k+1) m^n)-f(x,k m^n)|^2\, dx\text{,}
\end{aligned}
\end{equation}
and we define this to be $\|f\|_{\tH^{1/2}(\pp_{m^n})}^2$. Then the analog of 
Theorem \ref{thm:4.1} holds with $T_n F$ equal to the trace on $\pp_{m^n}$ and 
$H^{1/2}(\gp_{m^n})$ replaced by $\tH^{1/2}(\pp_{m^n})$. The proof is 
essentially the same, using the scaled version of Theorem \ref{thm:3.4} with 
(\ref{eq:4.8}) in place of (\ref{eq:3.20}).

\section{Fractals}\label{sec:5}

The Sierpinski gasket ($\sg$) is the self-similar fractal defined by the 
identity 
\begin{equation}\label{eq:5.1}
  \sg = \bigcup_{i=0}^2 \Phi_i(\sg)
\end{equation}
where $\Phi_i$ are the homothety maps of the plane 
$\Phi_i(x) = \frac 1 2 x + \frac 1 2 q_i$ and $\{q_0,q_1,q_2\}$ are the 
vertices of an equilateral triangle with side length $1$. $\sg$ is the unique 
nonempty compact subset of the plane satisfying (\ref{eq:5.1}). The mappings 
$\{\Phi_i\}$ comprise what is called an \emph{iterated function system}, and 
the iterates of the mappings are denoted 
$\Phi_w = \Phi_{w_1}\circ\cdots\circ\Phi_{w_m}$ where $w=(w_1,\dotsc,w_m)$ is 
a word of length $|w|=m$ and each $w_j=0$, $1$, or $2$. Then by iterating 
(\ref{eq:5.1}) we obtain 
\begin{equation}\label{eq:5.2}
  \sg=\bigcup_{|w|=m} \Phi_w(\sg)
\end{equation}
expressing $\sg$ as a union of $3^m$ miniature gaskets (called 
\emph{$m$-cells}) that are similar to $\sg$ with similarity ratio $2^{-m}$. 
Note that $\sg$ has the \emph{post-critically finite} (PCF) property that 
distinct $m$-cells can intersect only at the vertices $\Phi_w q_i$. For this 
reason we refer to $\{q_i\}$ as the \emph{boundary} of $\sg$, and 
$\{\Phi_w q_i\}$ as the boundary of the $m$-cell $\Phi_m(\sg)$, although these 
are not boundaries in the topological sense. 

We may approximate $\sg$ by the metric graphs $\sg_m=\sg\cap \tg_{2^{-m}}$. So 
the vertices are $\{\Phi_w q_i\}$, for $|w|=m$ and $i=0,1,2$, the edges are 
$\{\Phi_w e_{i j}\}$ for $|w|=m$ and $e_{i j}$ is the edge of the original 
triangle joining $q_i$ and $q_j$, and $\Phi_w e_{ij}$ has length $2^{-m}$. Let 
\begin{equation}\label{eq:5.3}
  E_m(f) = \sum_{i\ne j} \sum_{|w|=m} |f(\Phi_w q_i) - f(\Phi_w q_j)|^2
\end{equation}
denote the unrenormalized graph energy on $\sg_m$. Kigami (see \cite{8,12}) 
defines an energy on $\sg$ by 
\begin{equation}\label{eq:5.4}
  \cE(f) = \lim_{m\to \infty} \left(\frac 5 3\right)^m E_m (f)\text{.}
\end{equation}
The renormalization factor $(5/3)^m$ may be explained as follows: the 
sequence $(5/3)^m E_m(f)$ is always nondecreasing, and there exists a 
$3$-dimensional space of harmonic functions for which it is constant. We can 
then define $\dom \cE$, the space of functions of finite energy, as those 
functions for which (\ref{eq:5.4}) is finite. This is a space of continuous 
functions on $\sg$ that forms an infinite dimensional Hilbert space (after 
modding out by the constants) with norm $\cE(f)^{1/2}$. This energy satisfies 
the self-similar identity 
\begin{equation}\label{eq:5.5}
  \cE(f) = \sum_{i=0}^2 \left(\frac 5 3\right) \cE(f\circ \Phi_i)
\end{equation}
and satisfies the axioms for a local regular Dirichlet form (\cite{4}). Up to 
a constant multiple it is the only Dirichlet form with those properties. It is 
also symmetric with respect to the $D_3$ symmetry group of the triangle. This 
energy forms the basic building block for a whole theory of analysis on $\sg$, 
including a theory of Laplacians. We will not be using this wider theory here, 
but direct the curious reader to \cite{8,12} for details. 

Since the functions in $\dom\cE$ are continuous, there is no problem defining 
traces $T_m$ on $\sg_m$. The problem of characterizing the trace space 
$T_n(\sg)$ on the boundary of the triangle has been solved by Jonsson 
\cite{6,7} (see \cite{5} for a different proof) in terms of Sobolev spaces of 
order $\beta$, with $\beta=\frac 1 2+\frac{\log 5/3}{\log 4}$. Note that 
$\frac 1 2<\beta<1$. For any metric graph $G$ we define $H^\beta(G)$ (for any 
$\beta$ in the above range) to be the space of continuous functions such that 
\begin{equation}\label{eq:5.6}
  \|F\|_{H^\beta(G)}^2 = \sum_{e\in E}\int_0^{L_e}\int_0^{L_e} \frac{|F(e(x))-F(e(y))|^2}{|x-y|^{1+2\beta}}\, dxdy 
\end{equation}
is finite. Note that in contrast to (\ref{eq:2.1}), there is no term comparing 
values on intersecting edges, since the continuity condition takes care of the 
comparison (this idea is also used in \cite{11}). We then have the following 
result analogous to Theorem \ref{thm:3.1}. 

\begin{proposition}[\cite{5,6,7}]\label{prop:5.1}
The trace map $T_0$ is continuous from $\dom\cE$ to $H^\beta(\sg_0)$ with 
$\beta=\frac 1 2+\frac{\log 5/3}{\log 4}$ with 
\begin{equation}\label{eq:5.7}
  \|T_0 F\|_{H^\beta(\sg_0}^2 \leqslant c \cE(F)\text{.}
\end{equation}
Moreover, there exists a continuous linear extension map 
$E_0:H^\beta(\sg_0) \to \dom\cE$ with $T_0 E_0 f = f$ and 
\begin{equation}\label{eq:5.8}
  \cE(E_0 f) \leqslant c \|f\|_{H^\beta(\sg_0)}^2\text{.}
\end{equation}
\end{proposition}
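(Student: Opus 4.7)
The plan is to prove both inequalities by dyadic decomposition along the three edges of $\sg_0$, coupled with the self-similar structure of $\cE$ and the Kigami harmonic extension algorithm on $\sg$. The critical exponent $\beta=\frac 1 2+\frac{\log 5/3}{\log 4}$ is determined by the arithmetic identity $2^{2\beta-1}=5/3$, which makes the dyadic scaling of the $H^\beta$ kernel match the energy renormalization factor of (\ref{eq:5.4}).

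For the trace bound (\ref{eq:5.7}), fix one boundary edge, say $e_{01}$ from $q_0$ to $q_1$, parametrize it by $[0,1]$, and split the defining integral of $\|T_0 F\|_{H^\beta(\sg_0)}^2$ into dyadic shells where $2^{-m-1}<|x-y|\leqslant 2^{-m}$. Points $e(x),e(y)$ with $|x-y|\sim 2^{-m}$ can be joined by a short chain of vertices $\Phi_w q_i$ at level $m$, and a Poincar\'e-type inequality on the $m$-cells abutting the edge bounds $|F(e(x))-F(e(y))|^2$ by a constant times a squared-increment sum of $F$ over a few nearby $m$-cell vertex pairs. The shell weight $|x-y|^{-(1+2\beta)}$ together with the measure $dx\,dy$ contributes exactly $2^{m(2\beta-1)}=(5/3)^m$, and pairing this shell-by-shell against the monotone nondecreasing sequence $(5/3)^m E_m(F)$, whose limit is $\cE(F)$, gives the required bound after summing over the three boundary edges.

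For the extension (\ref{eq:5.8}), define $E_0 f$ as follows. Since $\sg_0\subset\sg$, every vertex $\Phi_w q_i$ of $\sg_m$ that lies on $\sg_0$ is already assigned a value by $f$; the remaining interior vertices are determined recursively by the Kigami harmonic extension rule, which minimizes $E_m$ subject to the fixed boundary data. This produces a consistent sequence of graph functions converging uniformly to a continuous $E_0 f\in\dom\cE$ with $T_0 E_0 f=f$. The energy estimate mirrors the trace argument: at each level $m$ the minimum-energy principle on each $m$-cell bounds its contribution to $(5/3)^m E_m(E_0 f)$ by the squared differences of $f$ between the vertices of $\sg_0$ at scale $2^{-m}$, and these sums reassemble, after comparison with the $(5/3)^m$ weight, into the dyadic decomposition of $\|f\|_{H^\beta(\sg_0)}^2$.

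The main obstacle is handling the critical-exponent bookkeeping. A blunt application of $E_m(F)\leqslant (3/5)^m\cE(F)$ gives a \emph{divergent} geometric series at precisely $\beta=\frac 1 2+\frac{\log 5/3}{\log 4}$; one must instead pair the dyadic shells with the nonnegative increments $(5/3)^m E_m(F)-(5/3)^{m-1}E_{m-1}(F)$, which sum to $\cE(F)-E_0(F)$, and show that the shell at scale $2^{-m}$ is controlled term-by-term by the $m$-th increment. A parallel issue arises for the extension: one must verify that the Kigami propagation of boundary data from level $m-1$ to level $m$ amplifies the relevant boundary energy by a factor of exactly $3/5$, so that the interior-cell energies close against the dyadic $H^\beta$ norm of $f$ without spilling constants that grow with $m$.
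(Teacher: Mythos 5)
First, on provenance: the paper does not prove Proposition \ref{prop:5.1} at all. It is imported from Jonsson and Hino--Kumagai \cite{5,6,7}, and the paper only derives the rescaled statement (Theorem \ref{thm:5.2}) from it. So your argument is, in effect, an attempt to re-prove Jonsson's trace theorem, and must be judged on its own.

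Your diagnosis of where the difficulty sits is correct: at the critical exponent $2^{2\beta-1}=\frac{5}{3}$ the crude bound, shell at scale $2^{-m}$ $\leqslant c\,\cE(F)$, produces a divergent sum over $m$. But the repair you propose fails. You want to dominate the scale-$2^{-m}$ shell term-by-term by the increment $\left(\frac{5}{3}\right)^m E_m(F)-\left(\frac{5}{3}\right)^{m-1}E_{m-1}(F)$. Test this on the nonconstant harmonic function $h$ with $h(q_0)=h(q_1)=0$, $h(q_2)=1$: by the very definition of harmonicity, $\left(\frac{5}{3}\right)^m E_m(h)$ is constant in $m$, so \emph{every} increment vanishes; yet $T_0h$ is a nonconstant continuous function on each edge of $\sg_0$ (its value at the midpoint of the bottom edge is $\frac{1}{5}$), so every dyadic shell of $\|T_0h\|_{H^\beta(\sg_0)}^2$ is strictly positive. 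Hence no estimate of the form (shell $m$) $\leqslant c\,$(increment $m$), nor even a bounded-overlap window of increments, can hold, and your ``main obstacle'' paragraph does not close the argument. The mechanism that actually makes the critical sum converge is different: writing $E_m^{\mathrm{edge}}(F)$ for the part of $E_m(F)$ coming from graph edges lying on $\sg_0$, one needs the boundary-localized quantity $\left(\frac{5}{3}\right)^m E_m^{\mathrm{edge}}(F)$ to be summable in $m$, i.e.\ the one-dimensional boundary must carry a strictly smaller renormalization factor than the gasket itself. For harmonic pieces this comes from the subdominant eigenvalues $\frac{3}{5},\frac{1}{5}$ of the harmonic extension matrices, which contract mean-square increments along an edge strictly faster than the energy renormalization, and promoting this to arbitrary $F\in\dom\cE$ (via decomposition into piecewise-harmonic parts, or via the Besov/heat-kernel machinery of \cite{5}) is precisely the substantive content of the cited theorems; it is absent from your sketch. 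The extension half of your plan is closer to workable: fixing $f$ at the level-$m$ vertices on $\sg_0$, taking energy-minimizing interpolants $F_m$, and using the projection identity $\cE(F_{m+1})=\cE(F_m)+\cE(F_{m+1}-F_m)$ gives a clean telescoping, but the key step, bounding $\cE(F_{m+1}-F_m)$ by the scale-$2^{-m}$ piece of $\|f\|_{H^\beta(\sg_0)}^2$, still requires an explicit competitor supported near the boundary, not merely an appeal to ``the minimum-energy principle on each $m$-cell,'' and your claim that Kigami propagation amplifies boundary energy ``by a factor of exactly $3/5$'' is again the assertion that needs proof rather than a tool you may invoke.
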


We note that \cite{5,6,7} use a slightly different, but equivalent norm for 
$H^\beta(\sg_0)$. 

Next we need to obtain the analogous statement for the trace map $T_m$ to 
$\sg_m$. We note that energy is additive for continuous functions, and in view 
of the self-similarity (\ref{eq:5.5}) iterated, 
\begin{equation}\label{eq:5.9}
  \cE(F)=\sum_{|w|=m} \left(\frac 5 3\right)^m \cE(F\circ \Phi_w)\text{,}
\end{equation}
and if we apply (\ref{eq:5.8}) to $F\circ\Phi_w$ we have 
\begin{equation}\label{eq:5.10}
  \sum_{|w|=m}\left(\frac 5 3\right)^m \|T_0 F\circ \Phi_w\|_{H^\beta(\sg_0)}^2 \leqslant c \sum_{|w|=m} \left(\frac 5 3\right)^m \cE(F\circ \Phi_w) = c \cE(F)
\end{equation}
by (\ref{eq:5.9}). Now we observe that $\sg_m=\bigcup_{|w|=m} \Phi_w(\sg_0)$, 
and this is a disjoint union of edges, since each edge is just a side of a 
triangle $\Phi_w(\sg_0)$ for some $w$ with $|w|=m$. 

So consider one of these edges, $\Phi_w(e_{i j})$. It is parameterized by $x$ 
in the interval $[0,2^{-m}]$, and the contribution (\ref{eq:5.6}) is 
\begin{equation}\label{eq:5.11}
\begin{aligned}
  \int_0^{2^{-m}}\int_0^{2^{-m}} &\frac{|F(e(x))-F(e(y))|^2}{|x-y|^{1+2\beta}}\, dxdy \\
    &= \frac{4^m}{2^{1+2\beta}} \int_0^1\int_0^1 \frac{|F(\Phi_w(e_{i j}(x))) - F(\Phi_w(e_{i j}(y)))|^2}{|x-y|^{1+2\beta}}\, dxdy
\end{aligned}
\end{equation}
after a change of variables. Summing all the contributions over all the edges 
in $\sg_m$ yields 
\begin{equation}\label{eq:5.12}
  \|T_m F\|_{H^\beta(\sg_m)}^2 = \sum_{|w|=m} \frac{4^m}{2^{(1+2\beta)m}} \|T_0 F\circ \Phi_w\|_{H^\beta(\sg_0)}^2
\end{equation}
by (\ref{eq:5.11}). But the choice of $\beta$ makes 
$\frac{4}{2^{1+2\beta}} = \frac 5 3$, so (\ref{eq:5.12}) combined with 
(\ref{eq:5.10}) yields 
\begin{equation}\label{eq:5.13}
  \|T_m F\|_{H^\beta(\sg_m)}^2 \leqslant c \cE(F)\text{.}
\end{equation}
This is the exact analog of (\ref{eq:5.7}). 

\begin{theorem}\label{thm:5.2}
The trace map $T_m$ is continuous from $\dom \cE$ to $H^\beta(\sg_m)$ for 
$\beta$ as in Proposition \ref{prop:5.1} and the estimate (\ref{eq:5.13}) 
holds. Moreover, there exists a continuous linear extension map 
$E_m:H^\beta(\sg_m)\to \dom\cE$ with $T_m E_m f = f$ and 
\begin{equation}\label{eq:5.14}
  \cE(E_m f) \leqslant c \|f\|_{H^\beta(\sg_m)}^2 \text{.}
\end{equation}
\end{theorem}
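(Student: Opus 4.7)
The first assertion and the estimate (\ref{eq:5.13}) are already proved in the discussion immediately preceding the theorem, so the only task is to construct the extension operator $E_m$. The plan is to glue together copies of the base-case extension $E_0$ from Proposition \ref{prop:5.1} cell-by-cell, exploiting the self-similar energy identity (\ref{eq:5.9}) together with the scaling calculation (\ref{eq:5.11})--(\ref{eq:5.12}) that was already used in establishing (\ref{eq:5.13}).

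Given $f\in H^\beta(\sg_m)$, for each word $w$ with $|w|=m$ I form the pullback $f\circ\Phi_w$, which is a function on $\sg_0$ (living on the three sides of the reference triangle), apply the extension operator $E_0$ from Proposition \ref{prop:5.1} to obtain $E_0(f\circ\Phi_w)\in\dom\cE$, and then set
\[
  (E_m f)\bigl(\Phi_w(y)\bigr) = E_0(f\circ\Phi_w)(y) \qquad (y\in\sg).
\]
I would then verify, in order, that (i) the pieces agree at shared boundary vertices of adjacent $m$-cells, so $E_m f$ is unambiguously defined and continuous on $\sg$; (ii) $T_m E_m f = f$; and (iii) the energy estimate (\ref{eq:5.14}) holds.

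For (i) the point is that adjacent $m$-cells $\Phi_w(\sg)$ and $\Phi_{w'}(\sg)$ meet only at a vertex $\Phi_w q_i = \Phi_{w'} q_{i'}$ by the PCF property, where both pieces agree with the common value of $f$ (continuous on $\sg_m$ by definition of $H^\beta$). For (ii) the identity $T_0 E_0 = \mathrm{Id}$ gives, on each side of $\Phi_w(\sg)$, that $E_m f$ restricted to that side equals $f$; these sides exhaust $\sg_m$. For (iii) I would invoke (\ref{eq:5.9}) applied to $E_m f$:
\[
  \cE(E_m f) = \sum_{|w|=m}\Bigl(\tfrac{5}{3}\Bigr)^m \cE\bigl((E_m f)\circ\Phi_w\bigr) = \sum_{|w|=m}\Bigl(\tfrac{5}{3}\Bigr)^m \cE\bigl(E_0(f\circ\Phi_w)\bigr),
\]
then bound each summand by $c\|f\circ\Phi_w\|_{H^\beta(\sg_0)}^2$ using (\ref{eq:5.8}), and finally convert back via the scaling (\ref{eq:5.11})--(\ref{eq:5.12}), which uses precisely the relation $4/2^{1+2\beta}=5/3$ to absorb the renormalization factor $(5/3)^m$. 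The result is $c\|f\|_{H^\beta(\sg_m)}^2$. Linearity is inherited from the linearity of $E_0$.

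The only technical point is (i), the consistency at shared vertices: because $H^\beta$ on a metric graph (with $\beta>1/2$) embeds into continuous functions and the norm (\ref{eq:5.6}) carries no extra junction term, one must check that the cellwise extensions really do paste to a continuous function whose energy is the sum of the cellwise energies. This reduces to the standard fact that $\cE$ is additive over partitions of $\sg$ into $m$-cells for continuous functions of finite energy, which is exactly what (\ref{eq:5.9}) encodes; so once continuity of $E_m f$ is in hand, finiteness of the right-hand side of the displayed identity above is both necessary and sufficient for $E_m f\in\dom\cE$, and the estimate (\ref{eq:5.14}) follows.
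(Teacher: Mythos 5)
Your proposal is correct and essentially reproduces the paper's own argument: the paper defines $E_m f$ on each cell $\Phi_w(\sg)$ as the transplanted extension $E_0(f\circ\Phi_w)\circ\Phi_w^{-1}$, notes continuity because the cellwise pieces agree with $f$ on the cell boundaries (and hence at the PCF intersection points), and obtains (\ref{eq:5.14}) from (\ref{eq:5.8}) by the same self-similarity and scaling computation, using $4/2^{1+2\beta}=5/3$, that yielded (\ref{eq:5.13}) from (\ref{eq:5.7}). Your write-up just makes these steps more explicit.
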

\begin{proof}
We have already established (\ref{eq:5.13}). To define the extension map $E_m$ 
we set 
\begin{equation}\label{eq:5.15}
  E_m(f) = \Phi_w^{-1} E_0(f\circ \Phi_w) \qquad \text{on $\Phi_w(\sg)$.}
\end{equation}
Note that $E_m(f)$ is continuous, because at the boundary points of the 
$m$-cells that make up $\sg_m$ we have $E_m(f) = f$. The same reasoning that 
obtains (\ref{eq:5.13}) from (\ref{eq:5.7}) also leads from (\ref{eq:5.8}) to 
(\ref{eq:5.14}). 
\end{proof}

Next we have the analog of Theorem \ref{thm:4.1}. 

\begin{theorem}\label{thm:5.3}
a) Let $F\in \dom\cE$. Then $T_m F\in H^\beta(\sg_m)$ for all $m$ with 
uniformly bounded norms, and 
\begin{equation}\label{eq:5.16}
  \sup_m \|T_m F\|_{H^\beta(\sg_m)}^2 \leqslant c \cE(F)
\end{equation}

b) Let $f_m\in H^\beta(\sg_m)$ be a sequence of functions with 
uniformly bounded norms satisfying the consistency condition 
$T_m f_{m'} = f_m$ if $m\leqslant m'$. Then there exists $F\in\dom\cE$ such 
that $T_m f=f_m$ and 
\begin{equation}\label{eq:5.17}
  \cE(F)\leqslant c\sup_m \|f_m\|_{H^\beta(\sg_m)}^2 \text{.}
\end{equation}
\end{theorem}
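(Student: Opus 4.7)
The plan for part (a) is to observe it as an immediate consequence of Theorem \ref{thm:5.2}: the estimate (\ref{eq:5.13}) applied at each level $m$ gives the desired uniform bound.

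For part (b) I would follow the strategy of Theorem \ref{thm:4.1}. First I would set $F_m = E_m f_m$ using the extension of Theorem \ref{thm:5.2}, so that $T_m F_m = f_m$ and $\cE(F_m) \leq c\|f_m\|_{H^\beta(\sg_m)}^2$ is uniformly bounded. Since the extension preserves values on $\sg_m$, and since for $n \leq m$ the underlying topological space $\sg_n$ is contained in $\sg_m$ (edges of $\sg_n$ decompose into edges of $\sg_m$), the consistency hypothesis gives
\begin{equation*}
  T_n F_m = F_m\big|_{\sg_n} = f_m\big|_{\sg_n} = T_n f_m = f_n \quad \text{whenever } n\leq m.
\end{equation*}
Normalizing by, say, requiring $F_m(q_0) = f_m(q_0)$ so that the equivalence classes have distinguished representatives, the sequence $\{F_m\}$ is bounded in the Hilbert space $\dom\cE$ (modulo constants). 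I would then extract a subsequence $F_{m_j}$ converging weakly to some $F \in \dom\cE$, and weak lower semicontinuity of $\cE$ combined with the uniform bound would yield (\ref{eq:5.17}).

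The nontrivial point, and the one I expect to be the main obstacle, is to show that weak convergence respects traces, i.e.\ that $T_m F = f_m$ for every $m$. Unlike in Theorem \ref{thm:4.1}, no convenient Fourier-analytic identification of traces is available. Instead I would rely on the fact that $\dom\cE$ embeds \emph{compactly} into $C(\sg)$: functions of finite energy on $\sg$ are uniformly H\"older continuous (this is standard in the analysis on PCF fractals, see \cite{8,12}), and $\sg$ is compact, so an Arzel\`a--Ascoli argument gives compactness of the embedding. Passing to a further subsequence, I may assume $F_{m_j} \to F$ uniformly on $\sg$. Fixing an arbitrary $m$ and restricting attention to $j$ with $m_j \geq m$, the computation above gives $T_m F_{m_j} = f_m$, and uniform convergence upgrades this pointwise equality to $T_m F = f_m$ in the limit. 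Once this compact-embedding step is in place, the remainder of the argument is a routine combination of weak compactness, weak lower semicontinuity, and the consistency condition.
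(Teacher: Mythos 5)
Your proposal is correct and follows essentially the same route as the paper: part (a) from the scaled trace estimate (\ref{eq:5.13}), and part (b) by extending each $f_m$ (energy-minimizing/$E_m$ extension), using the uniform energy bound together with the quantitative (H\"older/resistance-metric) continuity of finite-energy functions to extract a subsequence converging both weakly in $\dom\cE$ and uniformly on $\sg$, then passing the traces to the limit via pointwise convergence and the consistency condition. Your explicit normalization at $q_0$ and the remark that consistency makes $f_m(q_0)$ independent of $m$ are minor tidy additions to the same argument.
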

\begin{proof}
(\ref{eq:5.16}) is an immediate ff consequence of (\ref{eq:5.13}). To prove 
b) construct a sequence of functions $F_m$ by taking the harmonic (energy 
minimizing) extension of $f_m$ from $\sg_m$ to $\sg$. Then by (\ref{eq:5.14}), 
the sequence $\{F_m\}$ is uniformly bounded in $\dom\cE$. A quantitative 
version of the continuity of functions in $\dom\cE$ implies that the sequence 
$\{F_m\}$ is also uniformly equicontinuous. Thus by passing to a subsequence 
twice we can find a subsequence $\{F_{m_j}\}$ that converges both weakly in 
the Hilbert space $\dom\cE$ and uniformly to a function $F$ in $\dom\cE$ with 
the estimate (\ref{eq:5.17}) holding. Because the convergence is pointwise and 
the consistency condition holds we have $T_{m_j} F = F_{m_j} = f_{m_j}$ on 
$\sg_{m_j}$, so $T_m F = f_m$. 
\end{proof}

The second example of a fractal we consider is the Sierpinski carpet 
($\SC$), again defined by a self-similar identity 
\begin{equation}\label{eq:5.18}
  \SC = \bigcup_{i=1}^8 \Phi_i(\SC)
\end{equation}
where now $\Phi_i$ are the homothety maps of the plane with contraction ratio 
$1/3$ mapping the unit square into $8$ of the $9$ subsquares of side length 
$1/3$ (all except the central subsquare). This self-similar fractal is not 
PCF, so the method of Kigami cannot be used to construct an energy. 
Nevertheless, two approaches due to Barlow and Bass and Kusuoka and Zhou 
\cite{1} were given in the late 1980's, and recently in \cite{2} it was shown 
that up to a constant multiple there is a unique self-similar energy, so both 
approaches yield the same energy. Once again, all functions in $\dom\cE$ are 
continuous. The self-similar identity for the energy here is 
\begin{equation}\label{eq:5.19}
  \cE(F) = \sum_{i=1}^8 r \cE(F\circ \Phi_i)\text{,}
\end{equation}
where $r$ is a constant whose exact value has not been determined ($r$ is 
slightly larger than $1.25$). 

Again we may approximate $\SC$ by a sequence of metric graphs, $\{\SC_m\}$, 
with $\SC_m = \SC\cap \gp_{3^{-m}}$. Thus, the edges of $\SC_m$ have length 
$3^{-m}$ and are of the form $\Phi_w(e_i)$ with $|w|=m$, where 
$e_1,e_2,e_3,e_4$ are the boundary edges of the unit square. Again let $T_m$ 
denote the trace map onto $\SC_m$. The trace space for $T_0$ has been 
identified by Hino and Kumagai \cite{5} as the Sobolev space $H^\beta(\SC_0)$ 
with $\beta=\frac 1 2+\frac{\log r}{\log 9}$. Note that again 
$\frac 1 2<\beta<1$. 

\begin{proposition}[\cite{5}]\label{prop:5.4}
The trace map $T_0$ is continuous from $\dom\cE$ to $H^\beta(\SC_0)$ for 
$\beta=\frac 1 2+\frac{\log r}{\log 9}$ with 
\begin{equation}\label{eq:5.20}
  \|T_0 F\|_{H^\beta(\SC_0)}^2 \leqslant c \cE(F)\text{.}
\end{equation}
Moreover, there exists a continuous linear extension map 
$E_0:H^\beta(\SC_0)\to \dom\cE$  with $T_0 E_0 f = f$ and 
\begin{equation}\label{eq:5.21}
  \cE(E_0 f) \leqslant c \|f\|_{H^\beta(\SC_0)}^2 \text{..}
\end{equation}
\end{proposition}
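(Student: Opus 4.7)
The plan is to mimic the strategy that Jonsson used for $\sg$ in Proposition \ref{prop:5.1}, adapted to the non-PCF carpet. First, the exponent $\beta = \frac{1}{2} + \frac{\log r}{\log 9}$ is forced by the same self-similar bookkeeping as in (\ref{eq:5.11})--(\ref{eq:5.12}): under the affine change of variable rescaling a boundary edge of an $m$-cell to $[0,1]$, the kernel $|x-y|^{-(1+2\beta)}\,dx\,dy$ picks up the factor $3^{m(2\beta-1)}$, and the identity $3^{m(2\beta-1)} = r^m$ (forced by the definition of $\beta$) is exactly what is needed so that (\ref{eq:5.19}) can be used to telescope cell-wise contributions into a geometric series.

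For the trace estimate (\ref{eq:5.20}), the natural route is via the sub-Gaussian heat kernel estimates on $\SC$ established by Barlow--Bass. These yield a Poincar\'e-type inequality controlling the mean oscillation of $F$ over a boundary ball of radius $3^{-m}$ by the local energy of $F$ in a slight enlargement of that ball. One then bounds $\int\int |F(x)-F(y)|^2 |x-y|^{-(1+2\beta)}\, dx\, dy$ over $\SC_0\times \SC_0$ by splitting pairs according to the scale $m$ for which $|x-y|\sim 3^{-m}$: the contribution at scale $m$ is dominated by the energy of $F$ in the $m$-cells meeting the segment from $x$ to $y$, and summing over $m$ converges precisely because $3^{m(2\beta-1)} = r^m$.

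For the extension (\ref{eq:5.21}), I would take $E_0 f$ to be the energy-minimizing (``harmonic'') function on $\SC$ with boundary trace $f$ and estimate its energy by a Whitney-type telescoping. Write $f$ as $f_0+\sum_{m\geqslant 0}(f_{m+1}-f_m)$, where $f_m$ is a canonical scale-$3^{-m}$ approximant of $f$ on $\SC_0$; extend each increment $f_{m+1}-f_m$ into the layer of $m$-cells adjacent to the boundary using the minimum-energy single-cell extension, and sum the resulting pieces. Energy in each cell scales like $r^m$ by (\ref{eq:5.19}), and combining with the number of relevant cells and the discrete $H^\beta$-norm contribution of $f_{m+1}-f_m$ produces a geometric series that sums to a constant multiple of $\|f\|_{H^\beta(\SC_0)}^2$ exactly because of the match between $3^{m(2\beta-1)}$ and $r^m$.

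The main obstacle is the extension direction, because $\SC$ is not PCF: adjacent $m$-cells meet along whole line segments, so one cannot perform the matching at a finite vertex set as is possible for $\sg$, and the compatibility of per-scale extensions becomes a genuine analytic problem rather than a finite-dimensional one. What one really needs is a uniform single-cell estimate bounding the minimum energy of an extension across an $m$-cell purely in terms of the $H^\beta$ norm of the cell-boundary data, independent of $w$ and $m$ after rescaling. This is precisely where the deeper analytic machinery of \cite{5,1,2}, in particular the Barlow--Bass/Kusuoka--Zhou construction of $\cE$ together with its uniform harmonic-extension bounds, becomes indispensable; without it the telescoping is formally correct but the constants in the per-scale extension step are not controlled.
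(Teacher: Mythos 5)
The first thing to say is that the paper does not prove Proposition \ref{prop:5.4} at all: it is imported verbatim from Hino--Kumagai \cite{5}, exactly as Proposition \ref{prop:5.1} is imported from \cite{5,6,7}, and the paper's own work begins only afterwards, where the rescaling identity (\ref{eq:5.22}) (i.e.\ $3^{2\beta-1}=r$) is used to bootstrap the scale-$0$ statement to the traces on $\SC_m$ and then to the analogs of Theorems \ref{thm:5.2} and \ref{thm:5.3}. Your scaling bookkeeping is correct --- with $\beta=\frac12+\frac{\log r}{\log 9}$ one indeed has $3^{m(2\beta-1)}=r^m$, matching (\ref{eq:5.19}) --- but note that in the paper this computation serves the \emph{later} step ($T_0\mapsto T_m$), not the proposition itself, so it is motivation for the value of $\beta$ rather than part of a proof of (\ref{eq:5.20})--(\ref{eq:5.21}).

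As a proof of the proposition, your proposal has a genuine gap which you partly flag yourself: both directions rest on input you do not supply. For (\ref{eq:5.20}) you invoke sub-Gaussian heat kernel estimates and a Poincar\'e-type oscillation bound without formulating or proving the version you need on boundary balls; and for (\ref{eq:5.21}) the ``uniform single-cell estimate bounding the minimum energy of an extension across an $m$-cell purely in terms of the $H^\beta$ norm of the cell-boundary data'' is, after rescaling by (\ref{eq:5.19}), precisely the inequality (\ref{eq:5.21}) at scale $0$ --- so deferring it to \cite{5,1,2} makes the argument circular as a proof of this statement. The Whitney-type telescoping also leaves unresolved how the per-scale extensions are matched along the full line segments where adjacent cells of $\SC$ meet (the non-PCF difficulty you correctly identify), which is exactly the analytic content of Hino--Kumagai's theorem. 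In short: your sketch is a reasonable reconstruction of why the exponent is what it is and of the general Jonsson--Wallin-style strategy, but the correct resolution here is the paper's own, namely to cite \cite{5} for the proposition and reserve the scaling argument for the passage to $\SC_m$.
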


We now claim that the analogs of Theorem \ref{thm:5.2} and \ref{thm:5.3} hold 
for $\SC$ in place of $\sg$, with essentially the same proof. The only detail 
that needs to be checked is the dilation argument. In this case the 
contribution to (\ref{eq:5.6}) from the edge $e=F_w(e_1)$ is 
\begin{equation}\label{eq:5.22}
\begin{aligned}
  \int_0^{3^{-m}}\int_0^{3^{-m}} &\frac{|F(e_i(x))-F(e_i(y))|^2}{|x-y|^{1+2\beta}}\, dxdy \\
    &= \frac{9^m}{3^{1+2\beta}} \int_0^1\int_0^1 \frac{|F(\Phi_w(e_i(x)))-F(\Phi_w(e_i(y)))|^2}{|x-y|^{1+2\beta}}\, dxdy
\end{aligned}
\end{equation}
after a change of variable, as the analog of (\ref{eq:5.11}). We note that 
$\frac{9}{3^{1+2\beta}}=r$ in this case, so summing (\ref{eq:5.22}) yields the 
analog of (\ref{eq:5.13}) as a subsequence of (\ref{eq:5.20}). The rest of the 
arguments are the same. 

For our final fractal example we consider the classical Julia sets of complex 
polynomials. Fix a polynomial $P(z)$ (of degree at least two) and let 
$\cJ$ denote its Julia set. We assume $\cJ$ is connected. In many cases (see 
\cite{13}) it is possible to parameterize $\cJ$ by the unit circle as follows. 
Let $\Omega$ denote the unbounded component of the complement of $\cJ$ in 
$\dC$, so $\Omega\cup\{\infty\}$ is simply connected, and let $\varphi$ be a 
conformal map from $\{z:|z|>1\}$ to $\Omega$. In many cases $\varphi$ extends 
continuously to the boundary circle, and this maps $C$ onto $\cJ$ (usually 
not one-to-one). Although there is usually no useful formula for $\varphi$, in 
many cases it is possible to describe explicitly the points on $C$ that are 
identified under $\varphi$. There have been a number of papers that utilize 
this parametrization to construct an energy on $\cJ$ \cite{14,15,16,17}. 

Here we deal with a different question: how to characterize the traces on 
$\cJ$ of functions of finite energy on $\Omega$. The answer is almost 
immediate using the methods of section \ref{sec:3}. We know that 
$F\in H^1(\Omega)$ if and only if $F\circ\varphi\in H^1(|z|>1)$, and the space 
of traces of $F\circ\varphi$ on $C$ is exactly $H^{1/2}(C)$. Thus the space of 
traces of $F$ on $\cJ$, that we should denote $H^{1/2}(\cJ)$, is characterized 
by the finiteness of 
\begin{equation}\label{eq:5.23}
  \|F\|_{H^{1/2}(\cJ)}^2 = \int_0^{2\pi}\int_0^{2\pi} \frac{|F(\varphi(e^{i\theta}))-F(\varphi(e^{i\theta'}))|^2}{4\sin^2\frac 1 2(\theta-\theta')}\, d\theta d\theta'\text{.}
\end{equation}
One could perhaps hope for a more direct characterization in terms of an 
integral involving $|F(z)-F(z')|^2$ as $z$ and $z'$ vary over $\cJ$. This 
would involve choosing a measure on $\cJ$ (there are more than one natural 
choices) and finding the appropriate denominator in terms of a distance from 
$z$ to $z'$ on $\cJ$. Good luck!

\end{document}